\newtheorem{theorem}{Theorem}[section]
\newtheorem{lemma}[theorem]{Lemma}
\newtheorem{prop}[theorem]{Proposition}
\theoremstyle{definition}
\newtheorem{definition}[theorem]{Definition}
\theoremstyle{remark}
\newtheorem*{remark}{Remark}
\numberwithin{equation}{section}
\theoremstyle{remark}
\newtheorem*{spf}{Sketch of proof}
\theoremstyle{plain}
\numberwithin{equation}{section}
\begin{document}
\title{On the Center of Mass in General Relativity}
\author{Lan--Hsuan Huang}
\address{Department of Mathematics, Columbia University, New York, NY 10027.}
\email{lhhuang@math.columbia.edu}
\begin{abstract}
The classical notion of center of mass for an isolated system in general relativity is derived from the Hamiltonian formulation and represented by a flux integral at infinity. In contrast to mass and linear momentum which are well-defined for asymptotically flat manifolds, center of mass and angular momentum seem less well-understood, mainly because they appear as the lower order terms  in the expansion of the data than those which determine mass and linear momentum. This article summarizes some of the recent developments concerning center of mass and its geometric interpretation using the constant mean curvature foliation near infinity. Several equivalent notions of center of mass are also discussed. 
\end{abstract}
\thanks{The author was partially supported by the NSF through DMS-1005560.}

%\keywords{Constant mean curvature foliation, initial data set, center of mass}

\subjclass[2000]{Primary: 53A10, 83C05}
\maketitle

\section{Introduction}
General relativity is the theory of the Einstein equation. Let $(\mathcal{M}, \tilde{g})$ be a four dimensional manifold with a Lorentz metric $\tilde{g}$. The Einstein equation says
\[	
	G := Ric(\tilde{g}) - \frac{1}{2} R(\tilde{g})  \tilde{g}  = T,
\] 
where $Ric(\tilde{g})$ and  $R(\tilde{g})$ are the Ricci tensor and the scalar curvature of $\tilde{g}$ respectively. The Einstein tensor $G$ corresponds to the gravitational field of spacetime, while the stress-energy-momentum tensor $T$ is related to the matter content of spacetime.

If $M$ is a spacelike hypersurface in $(\mathcal{M}, \tilde{g})$, we denote the induced Riemannian metric by $g$ and the induced second fundamental form by $K$. It follows from the Gauss and Codazzi equations combined with the Einstein equation that $g$ and $K$ must satisfy the following constraint equations:
\begin{align*}
     	 	   R_g - | K |_g^2 + ( \textup{tr}_g K )^2 & =2 \mu,\\
			\textup{div}_g (K - \textup{tr}_g K g) & = J,
\end{align*}
where $R_g$ is the scalar curvature of $g$, and $\mu$ and $J$ are respectively the local energy density  and the local momentum density of matter defined by $T$. It is more convenient to introduce the momentum tensor $\pi := K - (\textup{tr}_g K) g$. Then the constraint equations become:
\begin{align*}
	R_g - | \pi |_g^2 + \frac{1}{2} ( \textup{tr}_g \pi )^2 &= 2 \mu,\\
	\textup{div}_g \pi &=  J.
\end{align*}
 The triple $(M, g, \pi)$ satisfying the constraint equations is called an \emph{initial data set}. An important class of initial data sets which model isolated systems is the class of asymptotically flat manifolds. An initial data set $(M, g,\pi)$ is called \emph{asymptotically flat} at the decay rate $q$ greater than $1/2$, if there exists a coordinate system $\{x\}$ outside some compact set, say $B_{R_0}$, such that $g_{ij} (x) = \delta_{ij} + O(|x|^{-q})$ and $\pi_{ij} (x) = O(|x|^{-1-q})$ with the corresponding decay rates on higher derivatives. If $\mu$ and $J$ vanish, $(M,g,\pi)$ is called \emph{vacuum}. Here we allow nonzero $\mu$ and $J$ if they satisfy the decay condition $\mu = O(|x|^{-2-2q})$ and $J = O(|x|^{-2-2q})$.

 The first major investigation into general relativity as a dynamical system was by Arnowitt, Deser, and Misner \cite{ADM61}. Later, various people further studied the correct formulations of total energy (the ADM mass), linear momentum, center of mass, and angular momentum for asymptotically flat manifolds (e.g. \cite{ DeWitt67, RT74, BO87}) and their well-definedness (e.g. \cite{Bartnik87, Chrusciel88, CW08, Huang09a}). 

The mass and linear momentum are defined by the flux integrals at infinity:
\begin{align} 
	\label{eqn:mass}	m &= \frac{1}{16 \pi } \lim_{r \rightarrow \infty} \int_{ |x| = r } \sum_{ i,j} \left( g_{ij,i} - g_{ii,j}\right) \frac{x^j}{ r } \, d\sigma_0,\\
	\notag	P^j &= \frac{1}{8\pi} \lim_{r \rightarrow \infty}  \int_{ |x| = r } \sum_{ i} \pi_{ij} \frac{x^i}{r} \, d\sigma_0.
\end{align}
The mass and linear momentum are well-defined because they are independent of the chosen asymptotically flat coordinates as proven by Bartnik \cite{Bartnik87} and Chru\'{s}ciel \cite{Chrusciel86}. The positive mass theorem by Schoen--Yau \cite{SY79,SY81b} and by Witten \cite{Witten81} states: if $(M,g,\pi)$ is asymptotically flat with the dominant energy condition $\mu \ge | J |_g$, then $m$ is non-negative. Moreover, $m = 0$ if and only if $(M,g,\pi)$ is isometric to a hypersurface in Minkowski spacetime.

Regge and Teitelboim \cite{RT74} observed that center of mass and angular momentum are not generally defined for asymptotically flat manifolds unless some parity condition at infinity is imposed (see also  \cite{AH, Chrusciel87} for other proposed conditions). Given an asymptotically flat coordinate system, we denote by $f^{\textup{odd} } = (f(x) - f(-x))/2$ and $f^{\textup{even} } = (f(x) + f(-x))/2$ the odd and even parts of a function $f$. They are defined outside $B_{R_0}$ when the coordinates $\{x\}$ exist.
\begin{definition}
$(M,g,\pi)$ is asymptotically flat satisfying the RT condition if $(M,g,\pi)$ is asymptotically flat at a decay rate $q>1/2$ and $g, \pi$ are respectively asymptotically even/odd, i.e. 
\begin{align*}
	& g_{ij}^{\textup{odd}}= O(|x|^{- 1 - q}) &\pi_{ij}^{\textup{even}}= O(|x|^{-2-q}),\\
	& \mu^{\textup{odd}} = O(|x|^{-3-2q}) &J^{\textup{odd}} = O(|x|^{-3-2q}), 
\end{align*}
and similarly on higher derivatives. When we say that $(M,g)$ is asymptotically flat satisfying the RT condition, we mean $(M,g)$ satisfying the corresponding conditions by letting $\pi=0$. 
\end{definition}
While all known physical examples of asymptotically flat manifolds satisfy the RT condition, there exist mathematical solutions to the vacuum constraint equations which violate the RT condition, and their center of mass and angular momentum are ill-defined \cite{BO87, Huang10b}. However, these examples may not be physical in the sense that the evolution of the initial data is unknown. Nevertheless, under the RT condition, the Hamiltonian formulation of center of mass and angular momentum proposed by Regge--Teitelboim \cite{RT74} and by Beig--{\'O} Murchadha \cite{BO87} are defined by
\begin{align} \label{def:ctm}
		C^l &= \frac{1}{16\pi m } \lim_{r \rightarrow \infty} \left[ \int_{ |x| = r } x^l\sum_{i,j} \left( g_{ij,i} - g_{ii, j} \right) \frac{x^j}{ r} \, d\sigma_0\right. \\
		&\qquad \qquad \qquad \qquad \left.- \int_{|x| = r} \sum_i \left( g_{i l} \frac{x^i}{ r } - g_{ii} \frac{x^l } {r} \right) \, d\sigma_0 \right], \notag\\
		J^p &=  \frac{1}{8\pi m } \lim_{r \rightarrow \infty}  \int_{ |x| = r } \sum_{i,j} \pi_{ij} Y_{(p)}^i \frac{ x^j}{r } \, d\sigma_0, \notag
\end{align}
where $Y_{(p)} = \frac{\partial}{\partial x^p} \times x$ (cross product) are the rotation vector fields.

In addition to the Hamiltonian formulation of center of mass \eqref{def:ctm}, there are other proposed definitions such as \eqref{CTM} and \eqref{CENTER} below. A geometric interpretation of center of mass uses the constant mean curvature foliation in $M$ near infinity proposed  by Huisken and Yau \cite{HY96}. They used the volume preserving mean curvature flow to construct the constant mean curvature foliation, under the condition that $(M,g)$ is \emph{strongly asymptotically flat}, i.e. $g_{ij} = (1 + \frac{2m}{|x|})\delta_{ij}+ O(|x|^{-2})$. Similar results were proven by Ye using the inverse function theorem \cite{Ye96}. Metzger generalized the results to small perturbations of the strongly asymptotically flat data \cite{Metzger07}. Those authors also proved that the foliation is unique under some conditions.  A more general uniqueness result was obtained by Qing and Tian \cite{QT07}. For strongly asymptotically flat metric which is conformally flat near infinity, Corvino and Wu proved the geometric center of Huisken--Yau's foliation is equal to center of mass \cite{CW08}, and we later removed the condition of being conformally flat \cite{Huang09a}.

However, there are various interesting physical solutions to the constraint equations which are not strongly asymptotically flat. For example, the family of Kerr solutions is a family of the exact solutions to the vacuum Einstein equation which model the rotating black holes with angular momentum. They are not strongly asymptotically flat, but they do satisfy the RT condition. 

In \cite{Huang10a}, we generalized the earlier results of the constant mean curvature foliation to asymptotically flat manifolds with the RT condition, which is a natural condition to impose when center of mass is discussed. Furthermore, the foliation that we constructed is unique under some conditions. From our construction, we can show that the foliation is asymptotically concentric, whose geometric center is consistent with the classical notion of center of mass. 

A new ingredient in constructing the foliation is that for any asymptotically flat manifold with the RT condition, one can find a canonical family of the {\it approximate spheres} $\mathcal{S}(p,R)$. They are constructed from perturbing the coordinate spheres $S_R(p):=\{x : |x-p| = R\}$ centered at $p$ of radius $R$. The approximate spheres are better adapted to the asymptotic symmetry than the coordinate spheres. Although their mean curvature is not exactly constant, they share many nice properties with the constant mean curvature surfaces constructed from them. For example, the leading order terms of the lowest two eigenvalues of the stability operator are $-2/R^2$ and $6m/R^3$. Moreover, $\{\mathcal{S}(p,R)\}$ form a foliation centered at a fixed $p$ for $R$ large when $m>0$. In many applications, the approximate spheres should be as good as the constant mean curvature surfaces, except that they are insensitive to center of mass. 

In order to construct the constant mean curvature surfaces from the approximate spheres, we have to choose the center $p$ of $\mathcal{S}(p,R)$, and a suitable choice of $p$ would allow us to find a nearby constant mean curvature surface. The dependence of the choice of $p$ on center of mass is based on the new observation of the following identity
\begin{align*} 
		\int_{x\in S_R(p)} (x^l - p^l)  H_S \, d\sigma_0 = 8 \pi m (p^l - C^l) + O(R^{1-2q}) \qquad \textup{for } l = 1, 2, 3,
\end{align*}
where $H_S$ is the mean curvature of $S_R(p)$. This identity can be thought as an alternative definition of center of mass. It also enables us to show that the geometric center of each constant mean curvature surface is close to center of mass, and the limit of the geometric centers is precisely center of mass.

The analytic underpinnings of the above identity rely crucially on the density theorem (Theorem \ref{DenThm2} below) that we obtained in \cite{Huang09a}. The density theorem is a refinement of the density theorem established by Corvino and Schoen \cite{CS06}. An initial data set $(M, g,\pi)$ is said to  have \emph{harmonic asymptotics} if, outside a compact set, $g = u^4 \delta$ and $\pi = u^2 \left( L_{X} \delta - (\textup{div} X)\delta \right)$ for some function $u$ and vector field $X$. Because of the constraint equations, $u$ and $X_i$ are harmonic up to the lower order terms.  Moreover,  the quantities $m, P, C, J$ can be read off of the expansion of $u$ and $X$. The density theorems involve finding suitable spaces so that initial data sets with harmonic asymptotics form a dense subset of  asymptotically flat data sets in a weighted Sobolev topology, and the physical quantities are continuous in that topology. 

The method to construct the approximate sequence of the initial data sets in the density theorems can be employed to generate more initial data sets, often with specified asymptotics and physical properties. Several further applications can be found in, for example \cite{Huang10b, HSW10}.

The article is organized as follows. We present the density theorems in Section 2, in which a slightly more general proposition is proven. In Section 3, we sketch the construction of the constant mean curvature foliation. Several equivalent notions of center of mass are then discussed in Section 4. 

\section{The density theorems}
In the previous section, the definition of asymptotically flat manifolds involves the pointwise regularity on $g$ and $\pi$. A slightly weaker regularity condition on asymptotically flat manifolds is defined by weighted Sobolev spaces (see more detailed discussions in, for example \cite{Bartnik87}). 

\begin{definition}[Weighted Sobolev spaces]
For a non-negative integer $k$, a non-negative real number $p$, and a real number $q$, we say $ f \in W^{k,p}_{-q} (M) $ if 
$$
		\| f\|_{W^{k,p}_{-q} (M)} := \left( \int_M \sum_{|\alpha| \le k} \left( \big| D^{\alpha} f \big| \rho ^{ | \alpha| + q }\right)^p \rho^{-3} \, d\textup{vol}  \right)^{\frac{1}{p}} < \infty,
$$
where $\alpha$ is a multi-index and $\rho$ is a continuous function with $\rho = |x|$ when the coordinates $\{x\}$ are defined. When $p = \infty$, 
$$
		\| f\|_{W^{k,\infty}_{-q} (M)} = \sum_{|\alpha | \le k} ess \sup_{M } | D^{\alpha} f | \rho^{ |\alpha| + q}.
$$
\end{definition}
Using the weighted Sobolev norms, we say that $(M,g,\pi)$ is asymptotically flat at the decay rate $q$ if $(g-\delta, \pi) \in W^{2,p}_{-q}(M) \times W^{1,p}_{-1-q} (M)$, and in addition $(M,g,\pi)$ satisfies the RT condition if $( g^{\textup{odd}} , \pi^{\textup{even}} ) \in W^{2,p}_{-1-q}(M \setminus B_{R_0}) \times W^{1,p}_{-2-q} (M \setminus B_{R_0})$.

The proof of the positive mass theorem by Schoen and Yau \cite{SY79} was originally stated for strongly asymptotically flat data.  Later, they extended the previous proof to allow general asymptotically flat data by a density argument \cite{SY81a}. They observed that, given a scalar-flat metric $g$, there exists a sequence of scalar-flat metrics with harmonic asymptotics whose mass converges to the mass of $g$. To generalize their density result, one has to consider not only the scalar curvature equation but the full constraint equations, which is more subtle. The following theorem by Corvino and Schoen is the analogue for the full constraint equations.

\begin{theorem}[Corvino--Schoen \cite{CS06}]\label{DenThm}
Suppose $p>3/2$ and $q\in (1/2, 1)$. Let $ ( g_{ij} - \delta_{ij} , \pi_{ij} ) \in W^{2,p}_{-q} ( M )  \times W^{1,p}_{-1 -q} ( M ) $ be a vacuum initial data set. There is a sequence of solutions $ ( \bar{ g }_k , \bar{ \pi }_k ) $ with harmonic asymptotics. Given any $ \epsilon > 0 $, there exist $k_0 > 0$ so that
$$
		\| g - \bar{g}_k \|_{W^{2,p}_{-q} ( M ) } \le \epsilon , \quad \| \pi - \bar{\pi}_k \|_{W^{1,p}_{-1-q} ( M ) } \le \epsilon, \quad \textup{for all } k \ge k_0.
$$
Moreover, the mass and linear momentum of $ ( \bar{ g }_k, \bar{ \pi }_k)$ are within $ \epsilon $ of those of $ ( g, \pi )$. 
\end{theorem}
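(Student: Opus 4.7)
My plan is to approximate the vacuum data $(g,\pi)$ by first truncating to exactly flat data outside a large ball and then projecting back onto the constraint set via the conformal method. Fix a smooth cutoff $\chi_R$ equal to $1$ on $B_R$ and vanishing outside $B_{2R}$, and form
\[
	\tilde g_R = \chi_R g + (1-\chi_R)\delta, \qquad \tilde \pi_R = \chi_R \pi.
\]
These data are exactly $(\delta,0)$ outside $B_{2R}$ but no longer satisfy the constraints. I would seek $u_R$ near $1$ and $X_R$ near $0$ so that the conformally modified data
\[
	\bar g_R = u_R^4\, \tilde g_R, \qquad \bar\pi_R = u_R^{-2}\bigl(\tilde\pi_R + \mathcal{D}_{\tilde g_R} X_R\bigr),
\]
where $\mathcal{D}_{\tilde g_R} X = L_{\tilde g_R} X - (\operatorname{div}_{\tilde g_R} X)\, \tilde g_R$, solve the vacuum constraints. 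Outside $B_{2R}$ the background is flat, so $\bar g_R$ is conformally flat there and the Lichnerowicz-momentum system reduces to essentially harmonic equations for $u_R$ and $X_R$; hence the corrected data automatically have harmonic asymptotics in the sense of the theorem.

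Substituting the conformal ansatz into the vacuum constraint map gives a coupled elliptic system for $(v_R, X_R):=(u_R-1, X_R)$ whose source is controlled by the failure of $(\tilde g_R, \tilde\pi_R)$ to be vacuum, which in turn is governed by $\nabla \chi_R$ supported in the annulus $A_R = B_{2R}\setminus B_R$. Using the weighted Sobolev smallness of $(g-\delta, \pi)$ on $A_R$, this source tends to zero in $W^{0,p}_{-2-q}(M)\times W^{0,p}_{-2-q}(M)$ as $R\to\infty$. The linearization at the flat background splits into the scalar Laplacian acting on $v_R$ and the vector Laplacian acting on $X_R$, both of which are isomorphisms between $W^{2,p}_{-q}$ and $W^{0,p}_{-2-q}$ for $q\in(1/2,1)$ by the Bartnik--McOwen weighted elliptic theory (the weight $-q$ avoids the indicial roots $\{-1,0\}$). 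I would then run a contraction-mapping argument on a small ball in $W^{2,p}_{-q}\times W^{2,p}_{-q}$ around $(0,0)$, producing $(v_R,X_R)$ with norm tending to $0$ as $R\to\infty$.

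The main obstacle is carrying out this fixed-point argument with constants uniform in $R$. Two issues must be tracked: first, the linearization at $(\tilde g_R, \tilde\pi_R)$ differs from its flat model by terms built from $\tilde g_R - \delta$, $\tilde\pi_R$ and their derivatives; a perturbative argument gives uniform invertibility only once $R$ is taken large enough that these perturbations are small in the appropriate operator norm. Second, the nonlinear terms in the Lichnerowicz equation (e.g.\ $|\bar\pi_R|^2_{\bar g_R}$, $u_R^{-7}$ factors and cross terms involving $v_R$ and $\mathcal{D}_{\tilde g_R} X_R$) must be bounded by products of $W^{2,p}_{-q}$ norms using weighted Sobolev multiplication, yielding quadratic smallness so that the map is a genuine contraction on a small ball whose radius is comparable to the source size.

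Finally, to obtain the stated conclusion I would estimate
\[
	\|\bar g_R - g\|_{W^{2,p}_{-q}(M)} + \|\bar\pi_R - \pi\|_{W^{1,p}_{-1-q}(M)}
\]
by the truncation error $\|g-\delta\|_{W^{2,p}_{-q}(M\setminus B_R)} + \|\pi\|_{W^{1,p}_{-1-q}(M\setminus B_R)}$ plus a contribution from $(v_R, X_R)$, both of which vanish in the limit. Setting $\bar g_k, \bar\pi_k$ equal to this construction for a sequence $R_k\to\infty$ yields the desired approximating sequence with harmonic asymptotics. Continuity of mass and linear momentum on $W^{2,p}_{-q}\times W^{1,p}_{-1-q}$, proved by Bartnik via the divergence theorem applied to the defining flux integrals, then implies $m(\bar g_k)\to m(g)$ and $P(\bar\pi_k)\to P(\pi)$, giving the $\epsilon$-closeness claim for all $k\ge k_0$.
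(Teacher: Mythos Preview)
Your outline is correct and follows the same strategy the paper attributes to Corvino--Schoen: cut off to exactly flat data, correct back to the constraint set via the conformal ansatz $(\bar g,\bar\pi)=(u^4\tilde g,\ u^{a}(\tilde\pi+\mathcal D_{\tilde g}X))$, and solve the resulting elliptic system for $(u-1,X)$ by the inverse function theorem (equivalently, your contraction mapping) using the weighted isomorphism of the Laplacian on $W^{2,p}_{-q}\to W^{0,p}_{-2-q}$ for $q\in(1/2,1)$. The paper itself does not reprove this theorem; it only remarks that the existence part ``involves solving an elliptic system for $u_k$ and $X_k$ by the inverse function theorem'' and then supplies the continuity of $m$ and $P$ separately as Proposition~\ref{prop:cont}.

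Two small points worth tightening. First, the conformal power on $\pi$: the paper's convention for harmonic asymptotics is $\pi=u^{2}\bigl(L_X\delta-(\operatorname{div}X)\delta\bigr)$, not $u^{-2}$; this is only a normalization, but you should match it so that outside $B_{2R}$ your $\bar\pi_R$ literally has the form declared in the definition. Second, the closing step on convergence of $m$ and $P$ is not quite Bartnik's coordinate-invariance theorem; it is the argument carried out in the paper's Proposition~\ref{prop:cont} (divergence theorem plus the constraint equations to convert the flux to a convergent volume integral, then smallness of a boundary term on a well-chosen sphere), which is exactly the mechanism Corvino--Schoen use. Citing that proposition (or reproducing its two-line estimate) would make your final paragraph airtight.
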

The theorem says that the solutions with harmonic asymptotics are dense among general asymptotically flat solutions. Moreover, the mass and linear momentum, which can be explicitly expressed in the expansion of the solutions with harmonic asymptotics, converge to those of the original initial data set in these weighted Sobolev spaces. However, in the above theorem the center of mass and angular momentum may not converge, neither are they defined generally for asymptotically flat manifolds. In the following, we show that in some more refined weighted Sobolev spaces, solutions with harmonic asymptotics form a dense subset inside asymptotically flat solutions with the RT condition so that the center of mass and angular momentum are continuous in that topology. Also, we can remove the condition that $(g,\pi)$ is vacuum.

\begin{theorem}[\cite{Huang09a}]{\label{DenThm2}}
Suppose $p>3/2$ and $q\in (1/2, 1)$. Let $ ( g_{ij} - \delta_{ij} , \pi_{ij} ) \in W^{2,p }_{ - q} (M) \times W^{ 1,p }_{ -1- q} (M) $ be an initial data set. Suppose that $(g_{ij}^{\textup{odd}} , \pi_{ij}^{\textup{even} }) \in W^{2,p }_{ -1 - q} ( M \setminus B_{ R_0 })  \times W^{1,p }_{ -2- q} ( M \setminus B_{ R_0 })$.

There is a sequence of data $ ( \bar{ g }_k , \bar{ \pi }_k ) $ with harmonic asymptotics.  Given any $ \epsilon > 0 $, $ ( \bar{ g }_k , \bar{ \pi }_k ) $  is within an $\epsilon$-neighborhood of $ (g, \pi ) $ in $W^{2,p }_{ - q} (M) \times W^{ 1,p }_{ -1 - q} (M) $ for $k$ large as in the above theorem. Moreover, there exist $R$ and $k_0 = k_0(R)$ so that
$$
		 \| \bar{g}_k^{\textup{odd}} \|_{ W_{ -1 - q }^{2,p} ( M \setminus B_R ) } \le C, \quad \| \bar{ \pi }_k^{\textup{even}} \|_{  W_{ -2 - q }^{1,p} ( M \setminus B_R ) } \le C, \quad \textup{for all } k \ge k_0.
$$
Furthermore, mass, linear momentum, center of mass, angular momentum of $ ( \bar{ g }_k, \bar{ \pi }_k)$ are within $ \epsilon $ of those of $ ( g, \pi )$.
\end{theorem}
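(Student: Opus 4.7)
The strategy is to carry out the Corvino--Schoen construction of Theorem \ref{DenThm} while tracking the parity decomposition of the perturbation at every step, so that the refined weighted bounds on $(\bar g_k,\bar\pi_k)$ appear as an additional byproduct. I would begin by fixing radii $R_k\to\infty$, cutting $(g,\pi)$ off smoothly outside $B_{R_k}$, and replacing it with the harmonic-asymptotics ansatz $\bar g_k=u_k^4\delta$, $\bar\pi_k=u_k^2(L_{X_k}\delta-(\textup{div}\,X_k)\delta)$. Solving for $(u_k-1,X_k)$ so that $(\bar g_k,\bar\pi_k)$ satisfies the full constraints is done exactly as in \cite{CS06}: the linearization at the background is essentially the pair $(\Delta,\Delta_L)$, a Fredholm isomorphism in appropriate weighted Sobolev spaces, and an implicit function argument produces a unique small solution. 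This already gives coarse convergence in $W^{2,p}_{-q}\times W^{1,p}_{-1-q}$ together with convergence of mass and linear momentum, by direct invocation of Theorem \ref{DenThm}.

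The new ingredient is a parity-refined elliptic estimate. The Laplace and Lam\'e operators have constant leading coefficients and are invariant under $x\mapsto -x$, so they map odd functions to odd and even to even. Splitting $u_k-1=v_k^{\textup{even}}+v_k^{\textup{odd}}$ and $X_k=Y_k^{\textup{even}}+Y_k^{\textup{odd}}$, the sources on the right-hand side of the constraint system split accordingly. Under the RT hypothesis the odd source for $v_k$ (built from $R_g^{\textup{odd}}$, $\mu^{\textup{odd}}$ and quadratic $\pi$-terms) lies in $W^{0,p}_{-3-q}$, and likewise the even source for $Y_k$ (built from $(\textup{div}\,\pi)^{\textup{even}}$ and $J^{\textup{odd}}$) lies in $W^{0,p}_{-3-q}$, which is one power of $|x|$ better than the baseline sources. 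Inverting $\Delta$ and $\Delta_L$ in the refined spaces, which gains two orders of decay, then produces
\[
  \|v_k^{\textup{odd}}\|_{W^{2,p}_{-1-q}(M\setminus B_R)}+\|Y_k^{\textup{even}}\|_{W^{2,p}_{-1-q}(M\setminus B_R)}\le C
\]
with $C$ independent of $k$, depending only on the RT-norms of $(g,\pi)$. Translating via the conformal formulae and using that products preserve the parity algebra yields the asserted bounds on $\bar g_k^{\textup{odd}}$ and $\bar\pi_k^{\textup{even}}$.

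The convergence of center of mass and angular momentum then follows by combining the coarse convergence with the uniform refined bound. For each finite $r$ the integrands defining $C^l$ and $J^p$ in \eqref{def:ctm} are continuous functionals of $(g,\pi)$ in the coarse topology, so for each $r$ one has $C^l_r(\bar g_k,\bar\pi_k)\to C^l_r(g,\pi)$ as $k\to\infty$. The uniform refined bound supplies a tail estimate $|C^l_r-C^l|\le o_r(1)$ that is uniform in $k$ (and in $(g,\pi)$), because the RT decay on $g^{\textup{odd}}$ and $\pi^{\textup{even}}$ is precisely what forces absolute convergence of the two boundary integrals in \eqref{def:ctm}. A three-$\epsilon$ argument in $r$ and $k$ then yields $C(\bar g_k,\bar\pi_k)\to C(g,\pi)$ and $J(\bar g_k,\bar\pi_k)\to J(g,\pi)$.

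The main obstacle lies in the parity-refined estimate of the middle paragraph: one has to run the Corvino--Schoen fixed point argument simultaneously in a coarse space and a refined space tracking the odd part of $u_k-1$ and the even part of $X_k$, and then check that the quadratic nonlinearities of the conformal method preserve the parity decomposition with the correct rate. This comes down to verifying that in each nonlinear term at least one factor sits in the coarse space with decay fast enough that its product with a parity-definite factor of refined decay still lands in the refined target space.
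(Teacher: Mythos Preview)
Your overall strategy matches the paper's: run the Corvino--Schoen construction, then analyze the parity of the correctors $(u_k,X_k)$ to obtain refined decay on $(\bar g_k^{\textup{odd}},\bar\pi_k^{\textup{even}})$, and finally deduce convergence of $C$ and $J$. The paper phrases the middle step as a bootstrap (solve first in the coarse space, then re-examine the equations satisfied by the odd parts to upgrade their decay) rather than a simultaneous fixed point in coarse and refined spaces, but that is a cosmetic difference.

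There is, however, a parity bookkeeping error in the vector-field part. Since $\bar\pi_k\approx L_{X_k}\delta-(\textup{div}\,X_k)\delta$ is built from first derivatives of $X_k$, and differentiation flips parity, it is the \emph{odd} components $X_k^{\textup{odd}}$ that generate $\bar\pi_k^{\textup{even}}$. Correspondingly, the source you must place in the refined space is the \emph{odd} part of the momentum-constraint defect, which under the RT hypothesis is built from $(\textup{div}\,\pi)^{\textup{odd}}=\textup{div}(\pi^{\textup{even}})$ and $J^{\textup{odd}}$; your pairing of $(\textup{div}\,\pi)^{\textup{even}}$ with $J^{\textup{odd}}$ is internally inconsistent. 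The displayed estimate should read $\|Y_k^{\textup{odd}}\|_{W^{2,p}_{-1-q}}\le C$, in agreement with the paper's statement that one improves the decay of $u_k^{\textup{odd}}$ and $X_k^{\textup{odd}}$. Once this is corrected, the translation back to $\bar g_k^{\textup{odd}}$ and $\bar\pi_k^{\textup{even}}$ via the conformal formulae goes through as you indicate.

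A smaller technical point: your three-$\epsilon$ argument assumes that the finite-radius surface integrals defining $C^l_r$ and $J^p_r$ are continuous in the coarse $W^{2,p}_{-q}$ topology. For $p>3$ this follows from Sobolev embedding, but for $3/2<p\le 3$ one does not have pointwise control of $Dg$ on a fixed sphere. The paper (see the proof of Proposition~\ref{prop:cont}) handles this by using the divergence theorem together with the constraint equations to convert the limit into a boundary term plus an absolutely convergent volume tail, and then selecting radii $s_k\to\infty$ along which the boundary difference is small by an averaging and H\"older argument. You should either restrict to $p>3$ or incorporate this device.
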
 
The existence statement of the sequence of solutions with harmonic asymptotics in the above two theorems involves solving an elliptic system for $u_k$ and $X_k$ by the inverse function theorem. %where $\bar{g}_k = u_k^4 \delta$ and $\bar{\pi}_k = u_k^2 \left( L_{X_k} \delta - (\textup{div} X_k)\delta \right)$. 
In the proof of Theorem \ref{DenThm2}, we study more carefully the elliptic system that $u_k, X_k$ and $u_k^{\textup{odd}}, X_k^{\textup{odd}}$ satisfy. Then we apply the boost trap argument to improve the decay rates of $u_k^{\textup{odd}}$ and $X_k^{\textup{odd}}$ and prove that $\{ (\bar{g}_k, \bar{\pi}_k )\}$ also satisfy the RT condition. The full proofs of the above two theorems are rather technical. Some arguments there can prove the following proposition which is slightly more general (in the sense that the sequence $\{(\bar{g}_k, \bar{\pi}_k) \}$ need not have harmonic asymptotics) and is of independent interest.
\begin{prop} \label{prop:cont}
Suppose that $p\ge 1$ and $q > 1/2$. Assume that $(g,\pi)$ and $(\bar{g}_k, \bar{\pi}_k)$ are asymptotically flat. If for any $\epsilon>0$, there exits $k_0$ so that 
\begin{align*}
	& \| g - \bar{g}_k \|_{W^{2,p}_{-q}(M \setminus R_0)} \le \epsilon, \quad \| \pi - \bar{\pi}_k \|_{W^{1,p}_{-1-q}(M \setminus R_0)} \le \epsilon, \quad \textup{for all } k \ge k_0,
\end{align*}
then the mass and linear momentum of $ ( \bar{ g }_k, \bar{ \pi }_k)$ are within $ \epsilon $ of those of $ ( g, \pi )$.
 
In addition to the above conditions, if $(g,\pi)$ and $\{(\bar{g}_k, \bar{\pi}_k)\}$ satisfy the RT condition and if there is a constant $c$ so that for $k$ large,
\begin{align} \label{eq:RT}
	\| (g-\bar{g}_k)^{\textup{odd}}\|_{W^{2,p}_{-1-q}(M\setminus B_{R_0}) } < c,\quad \| (\pi - \bar{\pi}_k)^{\textup{even}} \|_{W^{1,p}_{-2-q}(M\setminus B_{R_0})} < c,
\end{align} 
then the mass, linear momentum, center of mass, angular momentum of $ ( \bar{ g }_k, \bar{ \pi }_k)$ are within $ \epsilon $ of those of $ ( g, \pi )$.
\end{prop}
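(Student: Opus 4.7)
The approach is to convert each flux integral at infinity into a bulk integral over an exterior region via the divergence theorem, and then control the bulk integrand by the weighted Sobolev norms of $h:=g-\bar g_k$ and $\sigma:=\pi-\bar\pi_k$. For the linear momentum, the divergence of the sphere integrand $\sigma_{ij}\frac{x^i}{r}$ is controlled by the linearization of the constraint $\textup{div}_g\pi = J$ around $\bar g_k$, so that the difference of fluxes between radii $R$ and $r$ equals a volume integral over the annulus of quantities linear in $\sigma$ plus quadratic cross-terms in $(h,\sigma,g-\delta,\bar g_k-\delta,\pi,\bar\pi_k)$; each piece is bounded by $\|\sigma\|_{W^{1,p}_{-1-q}}\le\epsilon$ via the weighted H\"older inequality, with integrability guaranteed by $q>1/2$. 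For mass, one uses the classical identity expressing $\partial_j(g_{ij,i}-g_{ii,j})$ as $R_g$ modulo a quadratic remainder $Q(g-\delta,\partial g)$; subtracting the analogous identity for $\bar g_k$ produces a bulk integrand linear in $(h,\partial h,\partial^2 h)$ plus quadratic cross-terms, each bounded by $\|h\|_{W^{2,p}_{-q}}\le\epsilon$, together with an inner-boundary flux on $|x|=R$ handled by the weighted trace theorem.

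For the center of mass, applying the divergence theorem to $x^l(g_{ij,i}-g_{ii,j})$ converts the Hamiltonian flux into the volume formula
\begin{equation*}
  16\pi mC^l = \lim_{r\to\infty}\int_{|x|\le r}x^l\bigl(R_g - Q(g-\delta,\partial g)\bigr)\,dx,
\end{equation*}
and analogously for $\bar g_k$. The difference has naive pointwise size $|x|^{-1-q}$, so its $L^1$-norm over a ball of radius $r$ is only conditionally convergent for $q<1$, and the RT parity is exactly what forces the limit to exist. Since $x^l$ is odd under $x\to -x$, the symmetry of $\{|x|\le r\}$ eliminates the contributions of $h^{\textup{even}}$ and $\sigma^{\textup{odd}}$ from the effective integrand, leaving precisely the combinations controlled by $(g-\bar g_k)^{\textup{odd}}\in W^{2,p}_{-1-q}(M\setminus B_{R_0})$ and $(\pi-\bar\pi_k)^{\textup{even}}\in W^{1,p}_{-2-q}(M\setminus B_{R_0})$, both bounded by $c$ via hypothesis~\eqref{eq:RT}. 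The angular momentum argument is parallel, with $x^l$ replaced by the rotation vector field $Y_{(p)}^i$.

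To upgrade the RT-side bounds from bounded to small, I interpolate the weight. Since the parity projection is bounded on weighted Sobolev spaces, $\|h^{\textup{odd}}\|_{W^{2,p}_{-q}}\le \|h\|_{W^{2,p}_{-q}}\le\epsilon$, while by~\eqref{eq:RT} $\|h^{\textup{odd}}\|_{W^{2,p}_{-1-q}(M\setminus B_{R_0})}\le c$. Log-convexity of the weighted $L^p$ family in the weight parameter then gives
\begin{equation*}
  \|h^{\textup{odd}}\|_{W^{2,p}_{-q-\theta}(M\setminus B_{R_0})} \le C\,\epsilon^{1-\theta}c^{\theta},\qquad \theta\in[0,1],
\end{equation*}
and similarly for $\sigma^{\textup{even}}$. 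Choosing any small $\theta>0$ makes the bulk integrals defining $mC^l - m_kC_k^l$ absolutely convergent while retaining a bound of order $\epsilon^{1-\theta}$; dividing by $m$ yields $|C^l-C_k^l|\le C\epsilon^{1-\theta}$, which suffices after renaming $\epsilon$. The hardest step is the parity bookkeeping: verifying rigorously that when $R_g-R_{\bar g_k}$ and $Q(g)-Q(\bar g_k)$ are expanded into products in which each factor has a definite parity type, the combinations surviving after integration over $\{|x|\le r\}$ are exactly those controlled by~\eqref{eq:RT}, so that no leakage from $h^{\textup{even}}$ or $\sigma^{\textup{odd}}$ contaminates the limit.
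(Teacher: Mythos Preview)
Your bulk-integral strategy has a genuine gap. For the mass, the volume integrand you produce is $h_{ij,ij}-h_{ii,jj}$ with $h=g-\bar g_k$, which has pointwise size $|x|^{-2-q}$; this is \emph{not} in $L^1(M\setminus B_{R_0})$ when $q\le 1$, so the claim ``each bounded by $\|h\|_{W^{2,p}_{-q}}\le\epsilon$'' fails as stated. If instead you invoke the identity $h_{ij,ij}-h_{ii,jj}=(R_g-R_{\bar g_k})-(Q(g)-Q(\bar g_k))$, then every term is $O(|x|^{-2-2q})$ and hence integrable, and the quadratic difference $Q(g)-Q(\bar g_k)$ is indeed $\le C\epsilon$ by H\"older. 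But $R_g-R_{\bar g_k}=2(\mu-\bar\mu_k)+(\text{quadratic in }g,\bar g_k,\pi,\bar\pi_k)$, and the hypothesis says nothing about $\mu-\bar\mu_k$ or $J-\bar J_k$: you only know each of $\mu,\bar\mu_k$ is $O(|x|^{-2-2q})$ separately. Thus $\int_{|x|\ge R_0}(R_g-R_{\bar g_k})\,dx$ is bounded by a fixed constant, not by $\epsilon$. The same obstruction hits your momentum argument (where $\partial_i\sigma_{ij}$ produces $J_j-(\bar J_k)_j$) and your center-of-mass argument; the interpolation $\|h^{\textup{odd}}\|_{W^{2,p}_{-q-\theta}}\le C\epsilon^{1-\theta}c^\theta$ is a valid inequality, but the integral you want it to dominate still fails to be absolutely convergent for any $\theta\in[0,1]$ once you strip away the constraint structure, and retains the uncontrolled $\mu^{\textup{odd}}-\bar\mu_k^{\textup{odd}}$ piece if you keep it.

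The paper avoids this by \emph{not} differencing in the bulk. One keeps a floating inner radius $s$ and writes
\[
16\pi m(\bar g_k)=\int_{|x|=s}[\ldots]\,d\sigma_0+\int_{|x|\ge s}\bigl[(\bar g_k)_{ij,ij}-(\bar g_k)_{ii,jj}\bigr]\,dx,
\]
and likewise for $g$. The volume tails are each $O(s^{1-2q})$ by the constraint equations alone, so choosing $s$ large makes both $<\epsilon/4$ with no appeal to closeness. Only the two surface integrals at $|x|=s$ are then compared, and their difference is $\int_{|x|=s}|Dh|\,d\sigma_0$, controlled by $\|h\|_{W^{2,p}_{-q}}\le\epsilon$ via Sobolev embedding when $p\ge 3$, or along a well-chosen sequence of radii $s_k\to\infty$ when $1\le p<3$. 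The center-of-mass argument is identical after passing to odd parts, with the RT decay on $\mu^{\textup{odd}}$ making the tail $\int_{|x|\ge s}x^l[\ldots]^{\textup{odd}}\,dx$ small; condition~\eqref{eq:RT} is used only to control the surface term $\int_{|x|=s}\bigl(|x|\,|Dh^{\textup{odd}}|+|h^{\textup{odd}}|\bigr)\,d\sigma_0$. Your parity bookkeeping and interpolation are not needed once the argument is organized this way.
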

%\begin{remark}
%Notice that if $(g,\pi)$ and $\{(\bar{g}_k, \bar{\pi}_k)\}$ satisfy the RT condition, then from the definition, we easily have
%\begin{align} \label{eq:RT}
%	\| (g-\bar{g}_k)^{\textup{odd}}\|_{W^{2,p}_{-1-q}(M\setminus B_{R_0}) } < \infty,\quad \| (\pi - \bar{\pi}_k)^{\textup{even}} \|_{W^{1,p}_{-2-q}(M\setminus B_{R_0})} < \infty.
%\end{align}
%\end{remark}
\begin{proof}
By the definition of  mass and the divergence theorem, for any $s$ large, 
\begin{align*}
	16\pi m(\bar{g}_k) &= \lim_{r \rightarrow \infty} \int_{|x|=r} \sum_{i,j} [(\bar{g}_k)_{ij,i} -(\bar{g}_k)_{ii,j} ] \frac{x^j}{|x|} \, d\sigma_0 \\
	&=  \int_{|x|=s} \sum_{i,j} [(\bar{g}_k)_{ij,i} -(\bar{g}_k)_{ii,j} ] \frac{x^j}{|x|} \, d\sigma_0 \\
	&\quad + \int_{|x| \ge s} \sum_{i,j} [(\bar{g}_k)_{ij,ij} - (\bar{g}_k)_{ii,jj}]\, dx.
\end{align*}
In the volume integral $\sum_{i,j}\left[(\bar{g}_k)_{ij,ij} - (\bar{g}_k)_{ii,jj}\right]$ is the leading order term of the scalar curvature $R(\bar{g}_k)$. Hence, by the constraint equations, the integrand is of the order $O(|x|^{-2-2q})$, so there is a constant $s_0$ such that
\[
	 \int_{|x| \ge s} \sum_{i,j} [(\bar{g}_k)_{ij,ij} -( \bar{g}_k)_{ii,jj}] \, dx \le \frac{\epsilon}{4}, \quad \textup{for all } s \ge s_0.	
\] 
To handle the boundary term, we symbolically denote 
\[
	\bigg|\int_{|x|= s}  \sum_{i,j} \left[  g_{ij,i} - g_{ii,j} - ((\bar{g}_k)_{ij,i} -(\bar{g}_k)_{ii,j})\right] \frac{x^j}{|x|}  \, d\sigma_0 \bigg|\le  \int_{|x|=s}|D(g - \bar{g}_k )| \, d\sigma_0.
\]
%We symbolically denote the above integrand by $|D(g - \bar{g}_k )|$.  
First, if $p\ge 3$, by the Sobolev embedding inequality \cite[(1.9)]{Bartnik87}, 
\begin{align*}
	\sup_{M \setminus B_{R_0}} | D (g - \bar{g}_k)| |x|^{1+q} &\le c \| D (g - \bar{g}_k) \|_{W^{1,p}_{-1-q}(M\setminus B_{R_0})} \\
	&\le c \| g - \bar{g}_k \|_{W^{2,p}_{-q}(M\setminus B_{R_0})}.
\end{align*}
Fix $\epsilon$ and $s_0$. There exists $k_0'$ so that 
\[
	\| g - \bar{g}_k \|_{W^{2,p}_{-q}(M\setminus B_{R_0})} \le \frac{\epsilon}{8 c \pi s_0^{1-q}}, \quad \textup{ for all } k \ge k_0'.
\]
Therefore, 
\[
	\int_{|x|=s_0}|D(g - \bar{g}_k )| \, d\sigma_0 \le  \max_{|x|=s_0} (| D (g - \bar{g}_k)| s_0^{1+q} ) 4 \pi s_0^{1-q} \le \frac{\epsilon}{2}.
\]
Combining the above estimates, we prove that $| m(g) - m(\bar{g}_k) | \le \epsilon$ for all $k \ge k_0'$. When $1 \le p \le 3$, by the definition of the weighted Sobolev norm, 
\[
	 \int_{R_0}^{\infty} \int_{|x|=r} (| D (g - \bar{g}_k)  | r^{1+q})^p r^{-3} \, d\sigma_0 r^2 dr   \le \| g - \bar{g}_k \|_{W^{2,p}_{-q} (M \setminus B_{R_0}) }^p < \infty.
\]
Therefore, for each $k$, there exists $s_k$ with $s_k\rightarrow \infty$ so that 
\[
	\int_{|x| = s_k} ( |D(g - \bar{g}_k )| s_k^{1+q} )^p s_k^{-1} \, d\sigma_0 \le \frac{\epsilon^p}{2^p} s_k^{-1/2},
\]
for otherwise the volume integral would diverge. Then by the H\"{o}lder inequality, for $\frac{1}{p^*}= 1- \frac{1}{p} $,
\begin{align*}
	\int_{|x|=s_k}|D(g - \bar{g}_k )| \, d\sigma_0  &\le \left(\int_{|x|=s_k} ( |D(g - \bar{g}_k )| s_k^{1+q} )^p s_k^{-1} \, d\sigma_0  \right)^{\frac{1}{p}} s_k^{\frac{1}{p^*} - q} \\
	&\le \frac{\epsilon}{2} s_k^{-\frac{1}{2p} + \frac{1}{p^*} - q}.
\end{align*}
Notice that $-\frac{1}{2p} + \frac{1}{p^*} - q < 0$ for $1 \le p \le 3$ and $q>1/2$. Then there exists $k_0'$ so that the above surface integral is less than $\epsilon/2$ for all $k\ge k_0'$. Combining the above estimates, we conclude  $| m(g) - m(\bar{g}_k) | \le \epsilon$. The proof of the convergence of linear momentum is similar. 

If we have additionally condition \eqref{eq:RT}, we show that center of mass converges. By the definition of center of mass and the divergence theorem,
\begin{align*}
	&16\pi m(\bar{g}) C^l(\bar{g}) \\
	&=  \int_{|x|= s} \left[ x^l\sum_{i,j} (\bar{g}_{ij,i} -\bar{g}_{ii,j} ) \frac{x^j}{|x|}- \sum_i \left(\bar{g}_{il} \frac{x^i}{|x|} - \bar{g}_{ii} \frac{x^l}{|x|}\right)\right] \, d\sigma_0\\
	& \quad + \int_{|x| \ge s} x^l \sum_{i,j}(\bar{g}_{ij,ij} - \bar{g}_{ii,jj}) \, dx\\
	&=   \int_{|x|=s} \left[ x^l\sum_{i,j} ((\bar{g}^{\textup{odd}}_{ij})_{,i} -(\bar{g}^{\textup{odd}}_{ii})_{,j} ) \frac{x^j}{|x|}- \sum_i \left(\bar{g}^{\textup{odd}}_{il} \frac{x^i}{|x|} - \bar{g}^{\textup{odd}}_{ii} \frac{x^l}{|x|}\right)\right] \, d\sigma_0\\
	& \quad + \int_{|x| \ge s} x^l \sum_{i,j}((\bar{g}^{\textup{odd}}_{ij})_{,ij} - (\bar{g}^{\textup{odd}}_{ii})_{,jj}) \, dx.
\end{align*}
%The volume integral can be rewritten as
%\[
% \int_{|x| \ge s} x^l \sum_{i,j}(\bar{g}_{ij,ij} - \bar{g}_{ii,jj}) \, dx =   \int_{|x| \ge s} x^l \sum_{i,j}[(\bar{g}^{\textup{odd}}_{ij})_{,ij} - (\bar{g}^{\textup{odd}}_{ii})_{,jj}] \, dx.
%\]
The volume integral is of the order $O(s^{1-2q})$ by the constraint equations and condition \eqref{eq:RT}, so it is less than $\epsilon/4$ for all $s$ large enough. For the boundary term, we need to estimate the surface integral
\begin{align*}
	 \int_{|x|=s} \left[ |x| |D(\bar{g}_k - g)^{\textup{odd}}| + |(\bar{g}_k -g )^{\textup{odd}}| \right] d\sigma_0.
\end{align*}
We then proceed as above and show that, given $\epsilon>0$, there is a sequence $\{ s_k\}$ with $s_k \rightarrow \infty$  so that for $k$ large
\begin{align*}
	 \int_{|x|=s_k} \left[ |x| |D(\bar{g}_k - g)^{\textup{odd}}| + |(\bar{g}_k -g )^{\textup{odd}}| \right] d\sigma_0 \le \frac{\epsilon}{2}.
\end{align*}
We can then conclude that $|C^l(g) - C^l (\bar{g}_k) |\le \epsilon$. The proof for the angular momentum is analogous.
 \end{proof}
The second part of the statement in the above proposition is optimal in the sense that condition \eqref{eq:RT} is necessary. In the recent joint work with Rick Schoen and Mu--Tao Wang \cite{HSW10}, we are able to construct perturbed data $(\bar{g}_k, \bar{\pi}_k)$ close to some given asymptotically flat data with the RT condition $(g,\pi)$ in $W^{2,p}_{-q}(M) \times W^{1,p}_{-1-q}(M)$. The energy-momentum vector of the new data sets is equal to that of $(g,\pi)$, but their center of mass and angular momentum can be arbitrarily prescribed. In particular, the perturbed data sets violate condition \eqref{eq:RT}.

\section{The constant mean curvature foliation near infinity}
In the asymptotically flat region, the mean curvature $H_S$ of the coordinate sphere $S_R(p)$ is close to the constant $2/R$, which is the mean curvature of $S_R(p)$ in Euclidean space.  The difference $H_S - 2/R$ is measured by $h = g-\delta$. Because $H_S(x) = \textup{div}_g (\nu_g)$ where $\textup{div}_g$ is the divergence operator of $(M,g)$ and $\nu_g$ is the unit outward normal vector field on $S_R(p)$ with respect to $g$. By direct computations, we have
\begin{align} \label{MC}
\begin{split}
		H_S(x)&=\frac{2}{R} + \frac{1}{2} \sum_{i,j,k} h_{ij,k}(x ) \frac{ (x^i - p^i) ( x^j - p^j )( x^k  - p^k) }{ R^3} \\
				&\quad + 2 \sum_{i,j} h_{ij}(x) \frac{ ( x^i - p^i )( x^j - p^j ) }{ R^3 } - \sum_{i,j}h_{ij,i}(x ) \frac{ x^j - p^j}{R}  \\
				& \quad+ \frac{1}{2} \sum_{i,j}h_{ii,j}(x)  \frac{x^j - p^j}{ R }- \sum_i\frac{h_{ii} (x) }{R} + E_0(x), 
\end{split}
\end{align}
where $E_0(x) = O(R^{-1-2q})$ and $E_0^{\textup{odd}}(x)= O(R^{-2-2q})$. 

We first examine \eqref{MC} when $g$ is strongly asymptotically flat, i.e. $g = (1+\frac{2m}{|x|})\delta + O(|x|^{-2})$. Substituting $h_{ij} = \frac{2m}{|x|}\delta_{ij} + O(|x|^{-2}) $ into \eqref{MC} and simplifying the summation terms, we derive
\[
	H_S(x) = \frac{2}{R} - \frac{4m}{R^2} + \frac{6m (x-p)\cdot p}{R^4} + \frac{9m^2}{R^3} + O(R^{-3}+ |p|R^{-4}).
\]
The linearized mean curvature operator on $S_R(p)$ has an approximate kernel spanned by $\{ x^1- p^1, x^2 - p^2, x^3 - p^3\}$. It was then observed by Ye \cite{Ye96} that the projection of the last three terms of the right hand side above into the approximate kernel can be annihilated by choosing suitable $p$, which we later identified with center of mass \cite{Huang09a}. Then the inverse function theorem can be applied to find the constant mean curvature surface $\Sigma_R$ near the coordinate sphere $S_R(p)$. The two surfaces are close in the sense that $\Sigma_R$ is the graph of $\phi$ over $S_R(p)$ and $\|\phi\|_{C^{2,\alpha}(S_R(p))}$ is bounded by a constant uniformly in $R$. 

In our situation, because $g$ has weaker asymptotics and decay rates, the inverse function theorem cannot be directly applied because the asymptotics   of $H_S-2/R$ in \eqref{MC} may be far from being a constant. It simply reflects the fact that the coordinate spheres are defined with respect to some asymptotically flat coordinate chart which is not canonical, so the constant mean curvature surfaces may not be close to the coordinate spheres. One may consider to choose a better asymptotically flat coordinate chart, but it seems hard to handle and simplify all the summation terms in \eqref{MC}. Instead, we explicitly construct a family of approximate spheres $\mathcal{S}(p,R)$ which reflect the asymptotics better than the coordinate spheres for each $p$ and large $R$. Moreover, from our construction, the approximate spheres also adapt better the RT condition \cite{Huang10a}. 
\begin{lemma}\label{lemma:approx}
Let $(M,g)$ be asymptotically flat satisfying the RT condition.  There exists a constant $c$ independent of $R$ so that, for each $p$ and for $R$ large, there is an approximate sphere 
\begin{align*}
	\mathcal{S}(p,R) = \left\{ x + \phi (x) \nu_g : \phi \in C^{2,\alpha} (S_R(p))\right\}.
\end{align*}
Here $\phi^*$ $($the pull back of $\phi$ on $S_1(0)$$)$ satisfies
\begin{align} \label{PHI} 
	\| \phi^* \|_{C^{2,\alpha} (S_1(0))} \le c R^{1-q}, \quad \| (\phi^*)^{\textup{odd}}\|_{C^{2,\alpha} (S_1(0))} \le c R^{-q}.
\end{align}
Moreover, the mean curvature of $\mathcal{S}(p,R)$ is
\begin{align} \label{eq:approxmean}
	H_{\mathcal{S}} = \frac{2}{R} + \bar{f} + O(R^{-1-2q}),
\end{align}
where $\bar{f} := (4 \pi R^2)^{-1}\int_{S_R(p) } f\, d\sigma_0$ and $ f= H_S - 2/R$.
\end{lemma}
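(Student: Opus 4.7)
The plan is to construct $\phi$ by solving the linearized mean curvature equation on $S_R(p)$ modulo the approximate kernel (the first spherical harmonics), then to verify the bounds \eqref{PHI} and the mean curvature expansion \eqref{eq:approxmean}. Pulling back to the unit sphere via $y = (x-p)/R$ and writing $f^*(y) = f(p+Ry)$, the explicit form of $H_S = 2/R + f$ in \eqref{MC} combined with asymptotic flatness at rate $q$ gives $\|f^*\|_{C^{0,\alpha}(S_1(0))} = O(R^{-1-q})$. The Euclidean Jacobi operator $L_0 = -(\Delta_{S_R(p)} + 2/R^2)$ pulls back to $R^{-2}(-\Delta_{S_1(0)} - 2)$, whose kernel is spanned by $y^1, y^2, y^3$. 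I would define $\phi^*$ to be the unique solution, orthogonal to this kernel, of
\[
	-(\Delta_{S_1(0)} + 2)\phi^* = R^2 \bigl(f^* - \overline{f^*} - P_1 f^*\bigr),
\]
where $P_1$ is the $L^2$-projection onto $\ker(\Delta_{S_1(0)} + 2)$, and set $\phi(x) = \phi^*((x-p)/R)$. Standard Schauder estimates on $S_1(0)$ together with the bound on $f^*$ yield $\|\phi^*\|_{C^{2,\alpha}} \le cR^{1-q}$, which is the first estimate in \eqref{PHI}. For the refined odd-part bound I would track the parity in $y$ of each summand in \eqref{MC}: using the RT improvement $h^{\mathrm{odd}} = O(|x|^{-1-q})$ together with a Taylor expansion of $h(p+Ry)$ in powers of $p/R$, the $y$-odd part of $f^*$ has improved size $O(R^{-2-q})$, and the same Schauder estimate applied to the odd equation gives $\|(\phi^*)^{\mathrm{odd}}\|_{C^{2,\alpha}} \le cR^{-q}$.

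For \eqref{eq:approxmean} I would compute $H_{\mathcal{S}}$ via the graph formula for the mean curvature of $\{x + \phi(x)\nu_g : x \in S_R(p)\}$ in $(M,g)$, obtaining schematically
\[
	H_{\mathcal{S}} = H_S - L_0\phi - (L - L_0)\phi - Q(\phi),
\]
where $L$ is the full Jacobi operator for $S_R(p) \subset (M,g)$ and $Q$ collects the quadratic-and-higher corrections. The discrepancy $(L - L_0)\phi$ carries one extra $h$-factor of size $O(R^{-q})$ acting with two derivatives on a $\phi$ of size $R^{1-q}$, yielding an error of size $R^{-1-2q}$; similarly $Q(\phi) = O(R^{-3}\phi^2) = O(R^{-1-2q})$. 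Substituting the defining equation for $\phi$ then produces
\[
	H_{\mathcal{S}} = \frac{2}{R} + \bar{f} + P_1 f + O(R^{-1-2q}),
\]
so it remains to absorb $P_1 f$ into the error.

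The main obstacle, and the place where the RT condition becomes essential, is showing that $P_1 f = O(R^{-1-2q})$. The naive size of this projection is that of $f$ itself, namely $O(R^{-1-q})$, so the argument must gain an extra factor of $R^{-q}$. I would pair each term in \eqref{MC} against the coordinate function $y^l$, integrate over $S_1(0)$, and use integration by parts on the sphere to trade derivatives of $h$ for $h$ itself. Under the decomposition $h = h^{\mathrm{even}} + h^{\mathrm{odd}}$, the pairing of the even part with the odd function $y^l$ on a sphere centered at $0$ vanishes by parity, and the shift from $S_R(0)$ to $S_R(p)$ via Taylor expansion in $p/R$ produces only higher-order corrections; the contribution of $h^{\mathrm{odd}}$, thanks to its improved decay $O(|x|^{-1-q})$, then supplies the required gain. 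Carrying out this parity and decay bookkeeping consistently across all five summands in \eqref{MC} is the most delicate step of the argument.
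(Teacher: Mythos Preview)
The paper itself does not prove this lemma; it is stated as a summary of a construction carried out in \cite{Huang10a}, and the surrounding text treats it as input before turning to Lemma~\ref{lemma:key}. So there is no proof in this article to compare against directly.

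Your strategy---solve $L_0\phi = f - \bar f - P_1 f$ orthogonal to the first harmonics, then estimate the coefficient and nonlinear errors---is the natural one, and the order-counting you give for $(L-L_0)\phi$ and $Q(\phi)$ is correct. One comment: the step you flag as the main obstacle, namely $P_1 f = O(R^{-1-2q})$, is in fact already contained in the parity estimate you use for the second half of \eqref{PHI}. Once you have established $(f^*)^{\mathrm{odd}} = O(R^{-2-q})$ in $C^{0,\alpha}$ (via the term-by-term analysis of \eqref{MC} under the RT condition, together with the Taylor expansion in $p/R$ to pass from parity at the origin to parity at $p$), the projection $P_1 f^*$---being the pairing of $f^*$ against the odd functions $y^l$---is automatically $O(R^{-2-q})$, and this is contained in $O(R^{-1-2q})$ precisely when $q\le 1$. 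No additional integration by parts across the five summands is needed for this cruder bound. The sharper identity that pins down the leading coefficient of $P_1 f$ as a multiple of $m(p-C)$, and which \emph{does} require integration by parts and the density theorem, is the content of the separate Lemma~\ref{lemma:key}; in the paper's logic it is used afterwards to choose the center $p$, not to establish \eqref{eq:approxmean}.
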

\begin{remark}
When $q=1$, $\phi$ is bounded by a constant, so the approximate sphere stays within a constant neighborhood of the coordinate sphere. However, when $q<1$, the size of $\phi$ may increase as $R$ increases. Nevertheless, %even if the approximate sphere $\mathcal{S}(p,R)$ does not stay within a constant neighborhood of the coordinate spheres, 
$\mathcal{S}(p,R)$ has asymptotic antipodal symmetry with respect to the center $p$ as seen in \eqref{PHI}. 
\end{remark}
After showing the existence of the family of the approximate spheres, we prove that we can perturb them to construct constant mean curvature surfaces, if the center $p$ is chosen correctly. The following identity \eqref{CENTER} plays a key role to locate the center $p$ of the approximate spheres.  
\begin{lemma}\label{lemma:key}
Let $(M,g)$ be an asymptotically flat manifold satisfying the RT condition. For $l = 1,2,3,$
\begin{align}\label{CENTER}
	\int_{S_R(p)} (x^l - p^l)  \left( H_S - \frac{2}{R} \right) \,d\sigma_0 = 8\pi m (p^l -  C^l)+ O(R^{1-2q}). 
\end{align}
\end{lemma}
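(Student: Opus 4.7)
The plan is to reduce via Theorem~\ref{DenThm2} to data with harmonic asymptotics and compute directly there.

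First, I would apply Theorem~\ref{DenThm2} to obtain a sequence $\{\bar g_k\}$ with harmonic asymptotics that satisfies the RT condition, converges to $g$ in $W^{2,p}_{-q}$, and has $m(\bar g_k)\to m$ and $C^l(\bar g_k)\to C^l$. On each fixed sphere $S_R(p)$, the LHS of \eqref{CENTER} is a surface integral whose integrand depends only on $g$ and $\nabla g$; by the trace/Sobolev embedding (here $p>3/2$), it is continuous in the $W^{2,p}_{-q}$ topology at each fixed $R$. So it suffices to prove \eqref{CENTER} along the approximating sequence with an $O(R^{1-2q})$ error uniform in $k$, and then pass to the limit.

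For $g=u^4\delta$ outside a compact set with harmonic asymptotics, write $u = 1 + \tfrac{m}{2|x|} + \tfrac{B^q x^q}{|x|^3} + (\text{l.o.t.})$, where the dipole coefficient satisfies $m C^q = 2 B^q$ (verified by substitution into \eqref{def:ctm}). I would apply the conformal mean curvature formula $H_S = u^{-2}(2/R + 4 u^{-1}\partial_\nu u)$ on $S_R(p)$, with $\nu$ the Euclidean outward normal, and expand in powers of $1/R$ using $|x| = R + p\cdot\nu + O(R^{-1})$. Collecting terms through order $R^{-3}$ gives
\[ H_S - \tfrac{2}{R} = -\tfrac{4m}{R^2} + \tfrac{6m(p\cdot\nu)}{R^3} - \tfrac{12 B^q \nu^q}{R^3} + \tfrac{9m^2}{2R^3} + O(R^{-4}). \]
Multiplying by $(x^l-p^l)=R\nu^l$ and integrating on $S_R(p)$, the identities $\int \nu^l \, d\sigma_0 = 0$ and $\int \nu^l\nu^q \, d\sigma_0 = \tfrac{4\pi R^2}{3}\delta^{lq}$ annihilate the monopole and $m^2$ pieces; the $p\cdot\nu$ term contributes $8\pi m\, p^l$ and the dipole term contributes $-16\pi B^l = -8\pi m\, C^l$, yielding the required $8\pi m(p^l - C^l)$. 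Higher multipoles in $u$ and the non-harmonic remainder are absorbed into $O(R^{1-2q})$ by spherical parity and the decay rates allowed under harmonic asymptotics.

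The main obstacle will be the uniform control of this error along $\{\bar g_k\}$. This requires uniform bounds on the l.o.t.\ in $u_k$ (equivalently, on the higher-order terms in \eqref{MC} for $\bar g_k$), which come from the uniform $W^{2,p}_{-1-q}$ bound on $(\bar g_k)^{\textup{odd}}$ provided by Theorem~\ref{DenThm2}. A closely related point is handling the remainder $E_0$ in \eqref{MC}: splitting $E_0 = E_0^{\textup{even}} + E_0^{\textup{odd}}$ in $x$, the odd part gives $O(R^{1-2q})$ directly from $E_0^{\textup{odd}} = O(|x|^{-2-2q})$, while the even part requires a shift $x = y+p$ combined with parity,
\[ \int_{|y|=R} y^l E_0^{\textup{even}}(y+p) \, d\sigma_0 = \tfrac{1}{2}\int_{|y|=R} y^l \bigl[E_0^{\textup{even}}(y+p) - E_0^{\textup{even}}(y-p)\bigr]\, d\sigma_0, \]
and the difference is $O(|p|\,|\nabla E_0^{\textup{even}}|) = O(R^{-2-2q})$ by Taylor expansion, again giving $O(R^{1-2q})$ for bounded $|p|$.
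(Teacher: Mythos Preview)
Your reduction to harmonic asymptotics via Theorem~\ref{DenThm2} is exactly the approach the paper explicitly considers and rejects before giving its proof. The gap is this: in the density construction, $\bar g_k$ equals $u_k^4\delta$ only outside a ball $B_{r_k}$ with $r_k\to\infty$; inside that ball $\bar g_k$ is (a conformal factor times) the cut-off of $g$. Thus at a \emph{fixed} radius $R$, for all large $k$ the sphere $S_R(p)$ lies in the region where $\bar g_k$ is \emph{not} conformally flat, and your dipole expansion of $u_k$ simply does not describe $H_S^{(\bar g_k)}$ there. Conversely, your conformal computation is valid only for $R\gg r_k$, which forces $R$ to depend on $k$. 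Hence the two limits cannot be decoupled, and the most the density argument alone yields is the limiting statement
\[
\lim_{R\to\infty}\int_{S_R(p)}(x^l-p^l)\Big(H_S-\tfrac{2}{R}\Big)\,d\sigma_0=8\pi m(p^l-C^l),
\]
not the quantitative identity \eqref{CENTER} at each $R$ with an $O(R^{1-2q})$ remainder. Your claimed ``uniform in $k$'' control of the error cannot be obtained this way.

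The paper's fix is to avoid approximating at radius $R$ altogether. One isolates the worst term in \eqref{MC}, the cubic contraction $\mathcal I^l_g(R)=\int_{S_R(p)}(x^l-p^l)\cdot\tfrac12 h_{ij,k}\nu^i\nu^j\nu^k\,d\sigma_0$, and integrates by parts in the annulus $\{R\le|x-p|\le R_1\}$ to obtain an \emph{exact} identity $\mathcal I^l_g(R_1)-\mathcal I^l_g(R)=\mathcal B^l_g(R_1)-\mathcal B^l_g(R)$ between boundary integrals. This decouples the two radii: one keeps $R$ fixed and sends $R_1\to\infty$, applying the density theorem only to show $\lim_{R_1\to\infty}\mathcal I^l_g(R_1)=\lim_{R_1\to\infty}\mathcal B^l_g(R_1)$ (which follows from the conformal computation at $R_1\gg r_k$ along a sequence $R_k\to\infty$, exactly as in Proposition~\ref{prop:cont}). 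One then reads off $\mathcal I^l_g(R)=\mathcal B^l_g(R)$ for every $R$, substitutes back into \eqref{MC}, and recognizes the resulting boundary expression as $-8\pi m C^l + 8\pi m p^l$ from the definitions \eqref{eqn:mass} and \eqref{def:ctm}, with remainder $O(R^{1-2q})$. The integration-by-parts identity is the missing ingredient in your plan.
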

First, it is simple to see that if the metric $g = u^4 \delta$, then \eqref{CENTER} holds as  follows. The left hand side above is 
\begin{align*}
	&\int_{S_R(p)} (x^l - p^l)  \left( H_S - \frac{2}{R} \right) \,d\sigma_0 \\
	&= \int_{S_R(p)} (x^l - p^l) \left( \sum_{j} 4 u^3 u_{,j} \frac{x^j - p^j} {R} - \frac{u^4 -1 }{R} \right)\, d\sigma_0 + O(R^{1-2q})\\
	&= - p^l \int_{S_R(p)} \sum_j 4 u^3 u_{,j} \frac{x^j - p^j}{R} \, d\sigma_0 \\
	&\quad + \int_{S_R(p)} \left[\sum_j 4 x^lu^3 u_{,j} \frac{x^j - p^j} {R}-  u^4  \frac{x^l - p^l}{R}\right] \, d\sigma_0 + O(R^{1-2q}).
\end{align*}
On the other hand, computing $m$ and $C^l$ when $g = u^4 \delta$, we have
\begin{align*}
	 & \int_{S_R(p)} 4 u^3 u_{,j}\sum_j \frac{x^j - p^j}{R} \, d\sigma_0 = - 8\pi  m+ O(R^{1-2q}), \\
	 &\int_{S_R(p)} \left[\sum_j 4 x^lu^3 u_{,j} \frac{x^j - p^j} {R}-  u^4  \frac{x^l - p^l}{R}\right] \, d\sigma_0 = -8\pi m C^l + O(R^{1-2q}).
\end{align*}
We then conclude that \eqref{CENTER} holds for $g = u^4 \delta$.  For general metric $g$ satisfying the RT condition, one may tend to apply Theorem \ref{DenThm2} at this stage. However, it seems that we can only prove that \eqref{CENTER} holds for a sequence of radii $\{R_i\}$ up to some error $\epsilon$. In other words, it only allows us to conclude that 
\[
	\lim_{ R \rightarrow \infty }\int_{S_R(p)} (x^l - p^l)  \left( H_S - \frac{2}{R} \right) \,d\sigma_0 = 8\pi m (p^l -  C^l). 
\]
However, for our purpose to construct the constant mean curvature surface at each radius $R$, we need to show that \eqref{CENTER} holds for each $R$. Therefore, we study the summation terms in \eqref{MC} separately and observe some cancellations in the integral in \eqref{CENTER} after applying the divergence theorem.  In particular, our argument shows that \eqref{CENTER} holds when the asymptotically flat coordinates $\{x\}$ are global, without using the density argument. When the coordinates $\{x\}$ are defined only outside the compact set, the (outer) boundary term is then handled by the density theorem (Theorem \ref{DenThm2}).  The proof is computationally involved, so we omit  the detailed computations and present only the sketch of the proof below.  
\begin{spf}
We define, for $l= 1,2,3$,
\begin{align*}
				\mathcal{I}^l_g (R)  =  \int_{S_R(p)} ( x^{ l } - p^{ l} ) \left[  \frac{1}{2} \sum_{i,j,k} h_{ij,k}(x) \frac{ ( x^i - p^i )(x^j - p^j) (x^k - p^k)  }{ R^3 } \right] \, d \sigma_0.
\end{align*}
Because the asymptotically flat coordinates may not be defined in the interior, we use the divergence theorem in the annulus $A = \{ R \le |x-p| \le R_1\}$. Using integration by parts and simplifying the expression, we obtain an identity containing purely the boundary terms
\begin{align} \label{eq:boundary}
		\mathcal{I}^l_g (R_1) - \mathcal{I}^l_g (R) = \mathcal{B}^l_g (R_1) - \mathcal{B}^l_g (R) \quad \textup{ for all } R_1 \ge R,
\end{align}
where $\mathcal{B}^l_g (R)$ equals the boundary integrals: 
\begin{align*}
		&\int_{S_R(p)} ( x^l - p^l) \sum_{i,j} \left[\frac{1}{2} h_{ij,i}(x) \frac{ x^j - p^j }{ R } - 2 h_{ij}(x)\frac{ ( x^i - p^i)( x^j - p^j )  }{ R^3}\right] \,d \sigma_0 \\
		&+ \int_{S_R(p)}\frac{1}{2}  \sum_i \left[h_{ii} (x)\frac{ x^l - p^l  }{ R} +  h_{i l }(x) \frac{ x^i - p^i }{ R}  \right] d \sigma_0. 
\end{align*}
We would like to show that $\mathcal{I}^l_g(R)= \mathcal{B}^l_g (R)$ for each $R$ large and for $l=1,2,3$. It suffices to prove that 
\[
	\lim_{R_1 \rightarrow \infty} \mathcal{I}^l_g (R_1) = \lim_{R_1\rightarrow \infty}\mathcal{B}^l_g (R_1).
\]
First notice that if $g = u^4 \delta$ outside a compact set, then by direct computations, for any $R_1$ large (so that $g=u^4 \delta$ on $B_{R_1}(p)$),  
\[
	\mathcal{I}^l_g (R_1) = \mathcal{B}^l_g (R_1).
\]
To prove the identity for general metrics $g$, we can apply Theorem \ref{DenThm2}. There exists a sequence of data $\{ (\bar{g}_k, \bar{\pi}_k)\}$ with harmonic asymptotics and  radii $\{ R_k\}$ with $R_k \rightarrow \infty$. Given $\epsilon>0$, there exists $k_0'$ so that
\begin{align*}
	 |\mathcal{I}^l_g (R_k) -  \mathcal{I}^l_{\bar{g}_k}(R_k) |\le \epsilon,\quad |\mathcal{B}^l_g (R_k)-  \mathcal{B}^l_{\bar{g}_k} (R_k) | \le \epsilon \quad \mbox{for all } k \ge k'_0.
\end{align*}
The proof of the estimates is similar to the argument given in the proof of Proposition \ref{prop:cont}. Therefore, 
\[
	\lim_{k \rightarrow \infty} \mathcal{I}^l_g (R_k) = \lim_{k\rightarrow \infty} \mathcal{B}^l_g (R_k).
\] 
We then conclude that $\mathcal{I}^l_g(R)= \mathcal{B}^l_g (R)$ for $l=1, 2, 3$. Substituting $\mathcal{I}_g^l (R)$ by $\mathcal{B}^l_g(R)$ into \eqref{MC} and \eqref{CENTER},  and simplifying the expression, we have
\begin{align*}
		&\int_{S_R(p)} ( x^l - p^l ) \left( H_S - \frac{2}{R}\right) \, d\sigma_0 \\
		 &= - \frac{1}{2} \left[  \int_{S_R(p)}  (x^l - p^l ) \sum_{i,j} ( h_{ij,i} - h_{ii,j} )\frac{ x^j - p^j }{ R } \, d\sigma_0 \right.\\
		&\quad \left.-  \int_{S_R(p)} \sum_i \left(  h_{il} \frac{ x^i - p^i }{ R}- h_{ii} \frac{ x^l - p^l }{ R} \right) \,d\sigma_0 \right] + O(R^{1-2q}).
\end{align*}
Using the definitions of mass (\ref{eqn:mass}) and center of mass \eqref{def:ctm}, we derive \eqref{CENTER}.
\qed
\end{spf}

To construct constant mean curvature surfaces from the approximate spheres $\mathcal{S}(p,R)$, we consider the mean curvature operator
\[
	\mathcal{H}: C^{2,\alpha} (\mathcal{S}(p,R)) \rightarrow C^{0,\alpha}(\mathcal{S}(p,R))
\]
defined by $\mathcal{H}(\psi)$ equal to the mean curvature of the graph of $\psi$ over $\mathcal{S}(p,R)$. We would like to construct $\psi \in C^{2,\alpha} (\mathcal{S}(p,R))$ so that $\mathcal{H}(\psi)$ equals a constant which depends only on $p$ and $R$. By observing \eqref{eq:approxmean}, it is natural to set the constant equal to $2/R + \bar{f}$. By Taylor's formula, 
\begin{align} \label{MAIN}
\begin{split}
	\mathcal{H}(\psi ) =& \mathcal{H}( 0) + \Delta_{\mathcal{S}} \psi + \left( | A |^2 + Ric^M (\mu_g, \mu_g)\right) \psi \\ 
	&+ \int_0^1 \left( d \mathcal{H} ( s \psi ) - d \mathcal{H} ( 0)\right) \psi \, ds,
\end{split}
\end{align}
where $\Delta_{\mathcal{S}}$ is the Laplace--Beltrami operator and $A$ is the second fundamental form of $\mathcal{S}(p,R)$ in $M$, $ Ric^M$ is the Ricci curvature tensor of $(M,g)$, and $\mu_g$ is the outward unit normal vector of $\mathcal{S}(p,R)$. Because $\mathcal{S}(p,R)$ is close to $S_p(R)$ in the asymptotically flat region, 
\[
	| A |^2 = \frac{2}{R^2} + O(R^{-2-q}), \quad |Ric^M| = O(R^{-2-q}).
\]
Therefore, by \eqref{eq:approxmean}, to solve $\mathcal{H}(\psi)=2/R + \bar{f}$ is equivalent to solving
\begin{align} \label{eq:linear}
	\Delta_0 (\psi\circ \Phi) + \frac{2}{R^2} (\psi\circ \Phi )= \bar{f} - f+ \textup{ lower order terms},
\end{align}
where $\Delta_0$ is the standard spherical Laplacian on $S_R(p)$, and the diffeomorphism $\Phi : S_R(p) \rightarrow \mathcal{S}(p,R)$ is defined by $\Phi(x) = x + \phi \nu_g$. This operator $\Delta_0 + 2/R^2$ has the kernel spanned by $\{ x^1-p^1, x^2 - p^2, x^3 - p^3\}$. A necessary condition to solve \eqref{eq:linear} is that the projection of the right hand side in \eqref{eq:linear} into the kernel has to be zero. By Lemma \ref{lemma:key}, if $m$ is nonzero, we can choose the center $p$ so that the projection is zero. Moreover, the center $p = C+ O(R^{1-2q})$ is asymptotic to center of mass.

Finally, \eqref{eq:linear} is then a quasi-linear elliptic equation. We can apply the Schauder fixed point theorem to find a solution $\psi$ and prove the following theorem.

\begin{theorem}[\cite{Huang10a}]\label{THM1}
Assume that $(M, g)$ is asymptotically flat satisfying the RT condition at the decay rate $q$ greater than $1/2$. If $m$ is nonzero, then there exist surfaces $\{ \Sigma_R \}$ with constant mean curvature $H_{\Sigma_R}= (2 / R) + O(R^{-1-q})$ for $R$ large. Moreover, $\Sigma_R$ is a $c_0 R^{1-q}$-graph over $S_R (C)$, i.e. 
\[
	\Sigma_R =\left \{ x +  \psi_0 (x) \nu_g :  \psi_0 \in C^{2,\alpha}(S_R(C)) \right\}
\] 
with 
\[
	\| \psi_0^*\|_{C^{2,\alpha} (S_1 (0))} \le c_0 R^{1-q}, \quad \textup{and} \quad \| (\psi_0^*)^{\textup{odd}}\|_{C^{2,\alpha} (S_1 (0))} \le c_0 R^{-q}, 
\]
where $\psi_0^*$ is the pull back of $\psi_0$ on $S_1(0)$. 

Moreover, the geometric center defined by
\begin{align} \label{eq:geoctm}
	\lim_{R\rightarrow \infty} \frac{\int_{\Sigma_R} x \, d\sigma_0}{ \int_{\Sigma_R} \, d\sigma_0}
\end{align}
is equal to the center of mass \eqref{def:ctm}.
\end{theorem}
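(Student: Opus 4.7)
The plan is to realize $\Sigma_R$ as a normal graph of a function $\psi$ over the approximate sphere $\mathcal{S}(p,R)$ supplied by Lemma \ref{lemma:approx}, where the center $p = p(R)$ is to be determined simultaneously with $\psi$. Setting the target mean curvature equal to $2/R + \bar f$ as suggested by \eqref{eq:approxmean}, the Taylor expansion \eqref{MAIN} of the mean curvature operator, combined with the bounds $|A|^2 = 2/R^2 + O(R^{-2-q})$ and $|Ric^M| = O(R^{-2-q})$, converts the equation $\mathcal{H}(\psi) = 2/R + \bar f$ into the quasi-linear equation \eqref{eq:linear} on $S_R(p)$. The linearization is $\Delta_0 + 2/R^2$, whose kernel is spanned by the three coordinate functions $x^l - p^l$, so the problem naturally splits into a solvability condition on the projection into this kernel and a solvable Fredholm problem on its orthogonal complement.

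For the orthogonal component I would invert $\Delta_0 + 2/R^2$ on the kernel-orthogonal complement with uniform Schauder estimates (after pulling back to $S_1(0)$), and set up the Schauder fixed point theorem on a ball in $C^{2,\alpha}(\mathcal{S}(p,R))$ of radius $c_0 R^{1-q}$. The nonlinear remainder $\int_0^1 (d\mathcal{H}(s\psi) - d\mathcal{H}(0))\psi\,ds$ and the error terms from \eqref{eq:approxmean} are small enough in the appropriate weighted norm to close the iteration, thanks to the uniform bounds \eqref{PHI} on the defining function $\phi$ of the approximate sphere. The RT condition enters to separately control the odd part of $\psi$ via the odd-part bound on $\phi$ and on $f - \bar f$, yielding $\|(\psi_0^*)^{\textup{odd}}\|_{C^{2,\alpha}(S_1(0))} = O(R^{-q})$.

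For the kernel (solvability) condition, Lemma \ref{lemma:key} identifies the projection of the right-hand side of \eqref{eq:linear} against $(x^l - p^l)$, up to lower-order terms, with
\[
  \int_{S_R(p)} (x^l - p^l)(f - \bar f)\,d\sigma_0 = 8\pi m(p^l - C^l) + O(R^{1-2q}).
\]
Under the hypothesis $m \ne 0$, this is an invertible affine map in $p$, and a standard fixed point argument in the finite-dimensional parameter $p \in \mathbb{R}^3$ (coupled with the Schauder iteration for $\psi$) produces $p(R) = C + O(R^{1-2q})$. Substituting back shows that the resulting surface is a $c_0 R^{1-q}$-graph over $S_R(C)$, not merely over $S_R(p(R))$, by absorbing the $O(R^{1-2q})$ translation into $\psi_0$; the odd-part bound survives because $p(R) - C$ is negligible compared to $R^{-q}$ in the relevant range of $q$.

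Finally, to identify the geometric center \eqref{eq:geoctm} with $C$, I would use that $\Sigma_R$ is a graph over $S_R(C)$ of a function whose even part grows only like $R^{1-q}$ while the odd part decays like $R^{-q}$. Writing the surface integrals as perturbations of integrals over $S_R(C)$ and expanding the induced area element, the even part of $\psi_0$ contributes zero to $\int_{\Sigma_R} (x - C)\,d\sigma_0$ by symmetry, while the odd part contributes at most $O(R^{2-q}) = o(R^2)$; dividing by $\int_{\Sigma_R} d\sigma_0 = 4\pi R^2 + O(R^{3-q})$ and letting $R \to \infty$ gives the limit $C$. The main obstacle is the coupling between the choice of $p$ and the solution $\psi$: the kernel-projection identity required to pick $p$ is itself polluted by the quasi-linear nonlinearity, so the Schauder iteration and the finite-dimensional fixed point for $p$ must be executed jointly, with Lemma \ref{lemma:key} providing just enough quantitative control (via the nonzero coefficient $8\pi m$) to guarantee invertibility throughout the iteration.
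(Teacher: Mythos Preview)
Your proposal is correct and follows essentially the same route as the paper's sketch: build the approximate spheres via Lemma~\ref{lemma:approx}, linearize the mean curvature operator as in \eqref{MAIN}--\eqref{eq:linear}, invoke Lemma~\ref{lemma:key} to choose the center $p=C+O(R^{1-2q})$ so that the projection onto $\operatorname{span}\{x^l-p^l\}$ vanishes, and then close by the Schauder fixed point theorem; the identification of the geometric center with $C$ is argued exactly as you describe. One small slip: your assertion that $p(R)-C=O(R^{1-2q})$ is ``negligible compared to $R^{-q}$'' is false for $q\in(1/2,1)$ (since $1-2q>-q$ there), so the survival of the odd-part bound after translating from $S_R(p)$ to $S_R(C)$ needs a more careful accounting than the sentence you wrote.
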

Furthermore, the family of constant mean curvature surfaces constructed above form a foliation, if mass is positive.  
\begin{theorem}[\cite{Huang10a}]\label{thm:foli}
Under the same assumption as in Theorem \ref{THM1}, if in addition $m > 0$, then each $\Sigma_R$ is strictly stable and $\{ \Sigma_R \}$ form a foliation. 
\end{theorem}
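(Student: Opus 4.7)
The plan is to deduce both assertions from a careful spectral analysis of the Jacobi operator $L_R := \Delta_{\Sigma_R} + |A|^2 + Ric^M(\nu_g, \nu_g)$ on the surfaces $\Sigma_R$ produced by Theorem \ref{THM1}, using Lemma \ref{lemma:key} as the essential geometric input when $m > 0$.

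For strict stability, I would first use the $C^{2,\alpha}$ graph description of $\Sigma_R$ over $S_R(C)$ from Theorem \ref{THM1}, together with the asymptotics $|A|^2 = 2/R^2 + O(R^{-2-q})$ and $|Ric^M| = O(R^{-2-q})$, to compare $L_R$ (pulled back to $S_R(C)$) with the Euclidean spherical operator $\Delta_0 + 2/R^2$. The Euclidean model has eigenvalue $2/R^2$ on constants, a three-dimensional almost-kernel spanned by the translation Jacobi fields $\{(x^l - C^l)/R\}_{l=1,2,3}$, and all remaining eigenvalues bounded above by $-4/R^2$. The decisive point is to show that when $m>0$ the translation almost-kernel is lifted to eigenvalues $6m/R^3 + o(R^{-3})$. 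I would prove this via the Rayleigh quotient applied to the test functions $\varphi_l = (x^l - C^l)/R$: after expanding $\int_{\Sigma_R} \varphi_l(-L_R)\varphi_l\, d\sigma$, integrating by parts, and substituting the mean curvature expansion \eqref{MC}, the Euclidean piece cancels and the remaining correction reduces to the surface integral on the left-hand side of \eqref{CENTER}, whose value at $p=C$ is, by Lemma \ref{lemma:key}, of order $8\pi m$ times an $R^{-1}$ factor, giving the coefficient $6m/R^3$. The constant mode, which carries the negative stability eigenvalue $-2/R^2$, is excluded by restricting to the volume-preserving subspace $\{\phi : \int_{\Sigma_R}\phi\, d\sigma = 0\}$, and the higher Laplace modes give positive stability eigenvalues of order $1/R^2$; so on the volume-preserving subspace the positive gap $6m/R^3 > 0$ yields strict stability of $\Sigma_R$.

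For the foliation property, I would first upgrade Theorem \ref{THM1} to a smooth one-parameter family $R \mapsto \Sigma_R$. This can be done by invoking the implicit function theorem, which becomes applicable once $L_R$ is shown invertible (all eigenvalues nonzero for $m>0$ and $R$ large, as above), or by tracing smooth $R$-dependence through the Schauder fixed-point construction built around \eqref{MAIN}. Let $w_R$ denote the normal component of $\partial_R$(position) along $\nu_g$. Differentiating $H_{\Sigma_R} = 2/R + O(R^{-1-q})$ in $R$ and using the first-variation formula $\partial_R H = -L_R w_R$ yields the Jacobi equation $L_R w_R = -H'_{\Sigma_R}(R) = 2/R^2 + O(R^{-2-q})$. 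I would use the ansatz $w_R = 1 + \tilde{w}_R$: since $L_R(1) = |A|^2 + Ric^M(\nu_g,\nu_g) = 2/R^2 + O(R^{-2-q})$, the correction satisfies $L_R \tilde{w}_R = O(R^{-2-q})$. The right-hand side is an $R^{-2-q}$ perturbation whose $L^2$-projection onto the translation almost-kernel is itself small, while the constant and higher modes of $L_R$ have eigenvalues bounded below in absolute value by $c/R^2$; combining these with the $6m/R^3$ gap on translations, one obtains $\|\tilde{w}_R\|_{C^{2,\alpha}} = o(1)$, in fact of order $O(R^{-q})$. Hence $w_R = 1 + o(1) > 0$ for $R$ large, the surfaces $\Sigma_R$ are nested and sweep out an exterior neighborhood of infinity, and they form a foliation.

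The hardest step will be the $6m/R^3$ eigenvalue estimate: extracting this coefficient requires pairing the Jacobi quadratic form with approximate translation eigenfunctions and identifying the leading correction with a mass-sensitive flux integral, which is precisely the content of Lemma \ref{lemma:key} taken with $p$ close to $C$. Without this identity one could only conclude that the translation eigenvalues decay faster than $1/R^2$ without fixing their sign, so the hypothesis $m>0$ would not produce a usable spectral gap, and neither strict stability nor the invertibility of $L_R$ needed in the foliation step would be available. This is consistent with the fact that the role of $m>0$ in the theorem is exactly to open up the translation spectral gap that is absent in the Euclidean model.
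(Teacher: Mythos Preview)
Your foliation argument is in the right spirit and close to the paper's, but the stability step has a genuine gap: you have misidentified the geometric input that produces the $6m/R^{3}$ gap. Lemma~\ref{lemma:key} is a \emph{center-of-mass} identity, and at $p=C$ its left-hand side is $8\pi m(C^l-C^l)+O(R^{1-2q})=O(R^{1-2q})$, i.e.\ it vanishes to leading order. It therefore cannot supply a term of size $8\pi m/R$; that lemma is what pins down the center $p$ in the construction of $\Sigma_R$ (Theorem~\ref{THM1}), not what drives the stability eigenvalue. What actually enters the Rayleigh quotient for the translation modes is the correction $(|A|^2-2/R^2)+Ric^M(\nu_g,\nu_g)$, and the mass appears through the flux
\[
\int_{\Sigma_R} Ric^M(\nu_g,\nu_g)\,d\sigma_g \;=\; -\frac{8\pi m}{R}+O(R^{-1-q}),
\]
which is precisely the alternative mass expression \eqref{ADMmass}, not \eqref{CENTER}. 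So the key lemma you need here is the Ricci--mass identity, not Lemma~\ref{lemma:key}.

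There is a second, related difficulty you have not addressed. Under the weak (RT, $q>1/2$) asymptotics the pointwise value of $(|A|^2-2/R^2)+Ric^M(\nu_g,\nu_g)$ is only $O(R^{-2-q})$, which is larger than the target $6m/R^3$; a naive Rayleigh-quotient computation does not separate the translation modes from the noise. The paper handles this by invoking the Bochner--Lichnerowicz identity so that only \emph{integral} quantities need to be controlled, after which the cancellations reduce the lowest nonconstant eigenvalue to
\[
\lambda_1 \;\ge\; -\frac{3}{4\pi R^2}\int_{\Sigma_R} Ric^M(\nu_g,\nu_g)\,d\sigma_g + O(R^{-2-2q})\;=\;\frac{6m}{R^3}+O(R^{-2-2q}).
\]
Your proposal needs some device of this kind; substituting the mean-curvature expansion \eqref{MC} alone will not produce the required cancellation. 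Once $\lambda_1>0$ and $\lambda_0<0$ are established, the invertibility of $L_{\Sigma_R}$ and your lapse analysis $w_R=1+\tilde w_R$ go through essentially as you describe (the paper phrases this as projecting onto the lowest eigenfunction and using De~Giorgi--Nash--Moser plus Schauder, which is equivalent).
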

We first recall that a closed constant mean curvature surface $\Sigma$ is \emph{stable} if 
\begin{align} \label{eq:stable}
	\int_{\Sigma} u L_{\Sigma} u \, d\sigma_g = \int_{\Sigma}\left[ | \nabla u|^2 - (|A|^2 + Ric^M (\nu_g, \nu_g)) u^2 \right] \, d\sigma_g \ge 0
\end{align}
for all function $u$ with $\int_{\Sigma} u \, d\sigma_g = 0$, where $L_{\Sigma}  = - \Delta_{\Sigma} - (|A|^2 + Ric^M(\nu_g, \nu_g))$ is the stability operator. Geometrically, a stable constant mean curvature surface minimizes the area among nearby surfaces which enclose the same volume.  If the inequality in \eqref{eq:stable} is strict, $\Sigma$ is called \emph{strictly} stable.
\begin{spf}
To prove that each $\Sigma_R$ constructed in Theorem \ref{THM1} is strictly stable, one would like to estimate the eigenvalue of the stability operator $L_{\Sigma_R}$. Because $\Sigma_R$ is roughly spherical, $L_{\Sigma_R}$ has an approximate kernel spanned by functions whose mean values are almost zero. To show that $L_{\Sigma_R}$ indeed has no kernel, one has to estimate the term of $(|A|^2 - 2/R^2 )+ Ric^M(\nu_g, \nu_g)$ whose pointwise value is however undetermined by the condition on the asymptotics of $g$. We observe that by the Bochner--Lichnerowicz identity, we only need to derive the integral estimates. Hence, after some cancellations, the lowest eigenvalue $\lambda_1$ is then estimated by
\[
	\lambda_1 \ge -\frac{3}{4\pi R^2} \int_{\Sigma_R} Ric^M(\nu_g, \nu_g) \, d\sigma_g + O(R^{-2-2q}).
\]
Furthermore, we can relate the integral to mass and obtain
\begin{align*}
	\int_{\Sigma_R}  Ric^M(\nu_g, \nu_g)  \, d\sigma_g &= \int_{S_R(p)} Ric^M (\nu_0, \nu_0) \, d\sigma_0 + O(R^{-1-q}) \\
	&= -\frac{8\pi m}{R} + O(R^{-1-q}),
\end{align*}
where the second inequality follows from an alternative definition of mass (see \eqref{ADMmass} in the next section). Therefore, when $m$ is positive, the lowest eigenvalue $\lambda_1$ of $L_{\Sigma_R}$ among functions with zero mean value is positive for $R$ large, and then $\Sigma_R$ is strictly stable.

Furthermore, $L_{\Sigma_R}$ is indeed invertible. It is easy to see that the lowest eigenvalue $\lambda_0$ of $L_{\Sigma}$ among \emph{all} test functions is negative by letting the test functions equal to constants. We then employ the beautiful analysis of Huisken and Yau \cite{HY96} in our setting  to show that the next eigenvalue is positive. Hence $L_{\Sigma_R}$ is invertible. Then we apply the inverse function theorem for the mean curvature operator and conclude that $\{\Sigma_R\}$ vary smoothly in $R$ and that locally $\Sigma_{R'}$ is graph of $v$ over $\Sigma_R$ when $R'$ is close to $R$.  To prove that they form a foliation, it remains to show that $\Sigma_R$ and $\Sigma_{R'}$ do not intersect if $R\neq R'$; that is, $v$ is either strictly positive or negative. To see this, we decompose $v$ into the projection on the lowest eigenfunction and the orthogonal complement. We derive that the lowest eigenfunction is close to a nonzero constant by the De Giorgi--Nash--Moser theory. Then by the Schauder estimates, we prove that the norm of the orthogonal complement is relatively small. Hence, $v$ is a nonzero constant up to the lower order terms $O(R^{-q})$.  
\qed
\end{spf}

In order to make the geometric center of mass canonical and to define a polar coordinate system outside a compact set in $M$, one would prove that the constant mean curvature foliation constructed is globally unique. The inverse function theorem in the proof above already gives uniqueness of the foliation locally near $\Sigma_R$ for each $R$. For a more global result, we have the following theorem. Below, we denote by $\underline{r} = \min \{ |x| : x \in N\}$ the minimum radius of $N$.
\begin{theorem} [\cite{Huang10a}] \label{GU1}
Assume that $(M, g)$ is asymptotically flat with the RT condition at the decay rate $q$ greater than $1/2$ and $m>0$. Then there exists $\sigma_1$ so that if $N$ has the following properties:
\begin{enumerate}
\item $N$ is topologically a sphere,
\item $N$ has constant mean curvature $H = H_{\Sigma_R}$ for some $R \ge \sigma_1$,
\item $N$ is stable,
\item $\underline{r} \ge  H^{-a}$ for some $a$ satisfying $\displaystyle \frac{5-q}{2(2+q)} < a \le 1$,
\end{enumerate}
then $N = \Sigma_R$.
\end{theorem}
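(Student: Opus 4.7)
The strategy is to show that any $N$ satisfying the four hypotheses must coincide with a member of the foliation $\{\Sigma_R\}$ constructed in Theorem \ref{thm:foli}: realize $N$ as a small graph over a coordinate sphere, pin down the center of that sphere via the integral identity of Lemma \ref{lemma:key}, and then invoke the local invertibility of the stability operator. This follows the spirit of the Huisken--Yau and Qing--Tian uniqueness arguments, but must be adapted to the weaker RT decay.

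\textbf{Step 1 (a priori geometric estimates).} I would first combine stability (hypothesis (3)) with the Simons identity to bound $\int_N |\mathring{A}|^2\,d\sigma_g$ in terms of integrals of the ambient curvature over $N$. Since $N$ is topologically a sphere (hypothesis (1)), the Michael--Simon--Sobolev inequality is available on $N$ with a uniform constant. In the asymptotically flat region $|\mathrm{Ric}^M|$ decays like $|x|^{-2-q}$, and the far-from-origin hypothesis $\underline{r} \ge H^{-a}$ translates this into control by negative powers of $H$. Weighing these bounds against the area $\sim H^{-2}$ and iterating via Moser-type arguments yields pointwise control $|\mathring{A}| = o(H)$ together with a diameter bound $\mathrm{diam}(N)=O(H^{-1})$, so $N$ is nearly umbilical and quantitatively round. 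The lower bound $a > (5-q)/(2(2+q))$ should appear exactly as the threshold at which these error terms remain subdominant to the leading $H$-scale.

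\textbf{Step 2 (graph representation and center identification).} Near-umbilicity and $H_N = 2/R + O(R^{-1-q})$ let me realize $N$ as a $C^{2,\alpha}$ graph over a coordinate sphere $S_R(p)$, with graph function $\phi$ of the same order as in Lemma \ref{lemma:approx}. The center $p$ is so far undetermined. Testing the CMC equation $H_N = H_{\Sigma_R}$ against the coordinate functions $x^l - p^l$ and invoking Lemma \ref{lemma:key}, together with a careful treatment of the graph correction (using integration by parts to absorb derivatives of $\phi$ into boundary pieces that the RT parity kills to the required order), gives
\[
8\pi m\,(p^l - C^l) = O(R^{1-2q}),
\]
so $p^l \to C^l$ as $R \to \infty$. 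I expect this step to be the main obstacle: the identity must be robust enough that the error generated by $\phi$ (whose $C^{2,\alpha}$-norm can be as large as $R^{1-q}$) is absorbed into the $O(R^{1-2q})$ right-hand side, and the allowed range of $a$ in hypothesis (4) should be dictated precisely by what makes this balance close.

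\textbf{Step 3 (local uniqueness).} Once $N$ is known to be a small $C^{2,\alpha}$ graph over $\mathcal{S}(C,R)$ with the same radius and center as $\Sigma_R$, the local part of Theorem \ref{thm:foli} finishes the proof. For $m>0$ and $R$ large the stability operator $L_{\Sigma_R}$ is invertible on the orthogonal complement of its lowest eigenfunction, and the inverse function theorem applied to the mean curvature operator on a $C^{2,\alpha}$-neighborhood of $\Sigma_R$ shows that $\Sigma_R$ is the unique stable CMC surface with mean curvature $H_{\Sigma_R}$ in that neighborhood. Choosing $\sigma_1$ large enough that the estimates in Steps 1 and 2 place $N$ inside this neighborhood for every $R \ge \sigma_1$, we conclude $N = \Sigma_R$.
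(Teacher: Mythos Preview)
The paper does not actually prove this theorem: Theorem~\ref{GU1} is stated with a citation to \cite{Huang10a} and followed only by a short remark that the range of $a$ ``coincides with the a priori estimates by Metzger'' when $q=1$, and that the optimal case $a=0$ remains open for general asymptotics. There is therefore no proof here against which to compare your proposal.

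That said, your outline is consistent with what little the paper indicates and with the cited literature. The remark about Metzger's estimates confirms your guess in Step~1 that the lower bound on $a$ is dictated by the stability-plus-Simons a priori bounds (how far $N$ must sit from the origin for the curvature error $|x|^{-2-q}$ to be beaten by the $H$-scale), rather than by Step~2. Your Step~3 is exactly the local uniqueness the paper invokes in the sentence immediately preceding the theorem (``The inverse function theorem in the proof above already gives uniqueness of the foliation locally near $\Sigma_R$''). The one place your sketch is genuinely soft is Step~2: the graph function $\phi$ of an arbitrary stable CMC sphere $N$ has no a priori parity, so the phrase ``RT parity kills the boundary pieces'' is a hope rather than a mechanism; pinning the center to $C+O(R^{1-2q})$ requires instead that the near-umbilicity from Step~1 be strong enough to make the $\phi$-corrections to Lemma~\ref{lemma:key} lower order. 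Whether your Step~1 estimates are sharp enough for that is exactly where the threshold on $a$ would bite, but the present survey gives no details to check this against.
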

 When $q=1$, the range of $a$ coincides with the a priori estimates by Metzger \cite{Metzger07}. When $q<1$,  $a$ has a more narrow range. The optimal result that one could hope for is $a=0$; namely, the foliation is unique outside a fixed compact set. We remark that it is proven true for strongly asymptotically flat manifolds by Qing and Tian \cite{QT07}, and it remains open for general asymptotics. \footnote{After this paper was submitted to publish, we noticed an announcement on the uniqueness result by Ma \cite{M}.}

\section{Equivalence of center of mass}
The constant mean curvature foliation constructed in the previous section gives a geometric center of mass. By our construction, it is easy to see that the geometric center of mass \eqref{eq:geoctm} is equal to the classical Hamiltonian notion \eqref{def:ctm} because each constant mean curvature surface is roughly round centered at $p = C + O(R^{1-2q})$. In addition to the Hamiltonian and geometric notions of center of mass, another notion using conformal Killing vector fields proposed by R. Schoen is as follows:
\begin{align} \label{CTM}
		C_I^l = \frac{1}{ 16\pi m}\lim_{ r \rightarrow \infty} \int_{ |x| = r } \sum_{i,j} \left( R^M_{ij} - \frac{1}{2} R_g g_{ij} \right) Y_{(l)}^i \nu_g^j d{\sigma}_g , \quad l = 1, 2, 3 
\end{align}
where $Y_{(l)} =  \sum_i \big(  | x |^2 \delta^{l i} - 2 x^{l} x^i \big) \frac{\partial}{\partial x^i} $ are Euclidean conformal Killing vector fields. If the metric $g$ satisfies the RT condition, we can replace $\nu_g^j$ and $d \sigma_g$ in \eqref{CTM} by $\frac{x^j}{r}$ and $d\sigma_0$ respectively. This intrinsic definition is motivated by a similar expression of the mass when $ Y_{(l)}$ is replaced by $\{ - 2 x^{i}\frac{\partial  }{ \partial x^i}\}$, the radial direction Euclidean conformal Killing vector field:
\begin{align} \label{ADMmass}
		m_I = \frac{1}{ 16\pi } \lim_{r\rightarrow \infty} \int_{ |x| = r}  \sum_{i,j} \left( R^M_{ij} - \frac{1}{2} R_g g_{ij} \right) ( -2 x^i) \nu_g ^j d{\sigma}_g.
\end{align}
Notice that when $g$ is asymptotically flat, we can replace $\nu_g^j$ and $d \sigma_g$ in \eqref{ADMmass} by $\frac{x^j}{r}$ and $d\sigma_0$ respectively. The Euclidean conformal Killing vector fields $\{ - 2 x^{i}\frac{\partial  }{ \partial x^i}\}$ and $ Y_{(l)}$ generate dilation and translation near infinity. A detailed discussion on these conformal Killing vector fields can be found in, for example \cite{CW08}. 

One way to prove \eqref{CTM} and \eqref{ADMmass} is to apply the density theorems in Section 2. It seems that the proof of $m=m_I$ has not been published anywhere, so we present the proof below.
\begin{prop}
The notion of mass \eqref{ADMmass} is equal to the classical definition \eqref{eqn:mass}, i.e.
\[
	m_I=m.
\]
Similarly, \eqref{CTM} is equal to the classical notion of center of mass \eqref{def:ctm}, i.e.,
\[
	C^l_I = C^l, \quad \textup{for } l = 1, 2, 3.
\]
\end{prop}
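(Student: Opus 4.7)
The plan is to invoke the density theorems of Section 2 to reduce to harmonic asymptotics, and then verify both identities by direct computation in the conformally flat regime.

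As a first step I would establish a continuity statement in the spirit of Proposition \ref{prop:cont}: if $(\bar g_k, \bar\pi_k)$ approximates $(g,\pi)$ as in Theorem \ref{DenThm} (respectively Theorem \ref{DenThm2}), then $m_I(\bar g_k) \to m_I(g)$, and under the RT decay also $C^l_I(\bar g_k) \to C^l_I(g)$. To see this I would write the Einstein tensor $G_{ij} = R^M_{ij} - \frac{1}{2}R_g g_{ij}$ in terms of $h = g - \delta$, separating the leading linear-in-$\partial^2 h$ piece from quadratic remainders, and exploit the contracted Bianchi identity $\nabla^j G_{ij} = 0$ together with the fact that $-2 x^i \partial_i$ and $Y_{(l)}^i \partial_i$ are Euclidean conformal Killing fields. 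The divergence theorem then converts the surface integral at radius $r$ into one at a fixed radius $s$ plus a volume integral; the latter is controlled by the constraint quantities $\mu$ and $J$ (invoking the RT bounds $\mu^{\textup{odd}}, J^{\textup{odd}} = O(|x|^{-3-2q})$ in the center of mass case), and the boundary part is handled by the Sobolev/H\"older argument used in the proof of Proposition \ref{prop:cont}.

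Granted continuity, Theorem \ref{DenThm2} lets me assume $g = u^4 \delta$ outside a compact set, with a harmonic-type expansion
\[
	u(x) = 1 + \frac{m}{2|x|} + \sum_{l} \frac{a^l x^l}{|x|^3} + O(|x|^{-3}),
\]
where the constants $a^l$ are linked to $C^l$ through the standard formulas for mass and center of mass in the harmonic-asymptotics regime. Using the conformal change formulas to express $R_g$ and $R^M_{ij}$ as explicit combinations of $u$ and its derivatives, I would expand the integrands in \eqref{ADMmass} and \eqref{CTM} and verify that the leading pieces, after the divergence theorem on fixed-radius spheres, reduce precisely to the ADM-type surface integrals \eqref{eqn:mass} and \eqref{def:ctm}, while the terms quadratic in $\nabla u$ decay fast enough to drop out. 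For $C^l_I$ the RT-type cancellation already exploited in the discussion following Lemma \ref{lemma:key} is needed to offset the $|x|^2$ growth of $Y_{(l)}$.

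The main obstacle is the continuity of $C^l_I$. Because $Y_{(l)}$ grows quadratically in $|x|$, the integrand in \eqref{CTM} only decays like $|x|^{-q}$ at face value, so the boundary integral on $\{|x|=r\}$ is not controlled by $W^{2,p}_{-q}$-convergence of $\bar g_k \to g$ alone. The rescue is the RT condition: only the odd part of $h$ couples at leading order with $Y_{(l)}$ in $G_{ij}$, and that odd part lies in $W^{2,p}_{-1-q}$ by condition \eqref{eq:RT}, restoring the missing power of decay. Making this cancellation precise, while organizing the $\epsilon$-extraction in the same spirit as the proof of Proposition \ref{prop:cont}, constitutes the technical core of the argument.
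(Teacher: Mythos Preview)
Your proposal is correct and follows essentially the same route as the paper: reduce to the conformally flat case $g=u^4\delta$ via the density theorems, verify $m_I=m$ and $C_I^l=C^l$ there by direct computation from the expansion of $u$, and transfer back using a continuity argument in the style of Proposition~\ref{prop:cont}. The paper carries out only the mass case explicitly (with the expansion $u=1+\tfrac{m}{2|x|}+O(|x|^{-2})$ and the conformal Ricci formula) and refers to \cite{Huang09a} for $C_I^l=C^l$; your outline is somewhat more detailed, in particular in invoking the Bianchi identity and the conformal Killing property of $-2x^i\partial_i$ and $Y_{(l)}$ to organize the continuity step, but this is an elaboration of the same argument rather than a different one.
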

\begin{proof}
To prove $m=m_I$, we first consider the metric $g$ where  $g= u^4 \delta$ outside a compact set. Because $u$ is asymptotically harmonic, its expansion is $1+\frac{A}{|x|} + O(|x|^{-2})$ for some constant $A$. From \eqref{eqn:mass} and direct computations, $A = m/2$. Then, the expansion of the conformal factor $u$ is
\begin{align*}
	& u = 1 + \frac{m}{2 |x| } + O(|x|^{-2})\\
	& \partial_i u  = \frac{- m x^i}{2|x|^3} + O(|x|^{-3}), \quad \partial_i \partial_j u = -\frac{m\delta_{ij} }{2|x|^3} + \frac{3m x^i x^j}{2|x|^5} + O(|x|^{-4}).
\end{align*}
On the other hand, because $g$ is conformal to the Euclidean metric, the Ricci curvature of $g$ is
\[
	R_{ij} = - 2 u^{-1} \partial_i \partial_j u + 6 u^{-2} \partial_i u \partial_j u - 2(u^{-1} \Delta u + u^{-2} | \nabla u|^2)\delta_{ij}.
\]
Substituting the expansion of $u$ into the above formula, we get
\[
	R_{ij} (-2x^i) x^j = \frac{4m}{|x|^2} + O(|x|^{-3}).
\]
Also notice that $R = -4 u^{-1} \Delta u = O(|x|^{-4})$. Therefore, by \eqref{ADMmass},
\begin{align*}
		m_I = \frac{1}{ 16\pi } \lim_{r\rightarrow \infty} \int_{ |x| = r}\left[ \frac{4m}{|x|^2} + O(|x|^{-3})\right] \, d\sigma_0 = m.
\end{align*}
For the asymptotically flat metric $g$ with general asymptotics, we can apply the the density argument by Schoen and Yau \cite{SY81a} (or alternatively Theorem \ref{DenThm}): Let $\{\bar{g}_k\}$ be a sequence of metrics with harmonic asymptotics. We have 
 \[
 	\lim_{k\rightarrow \infty} m(\bar{g}_k) =m(g).
\]
The same argument in the proof of Proposition \ref{prop:cont} would also imply 
 \[
 	\lim_{k\rightarrow \infty} m_I(\bar{g}_k) =m_I(g).
\]
Because $m(\bar{g}_k) = m_I (\bar{g}_k)$ as computed above, we complete the proof.

The proof of $C^l = C_I^l$ can be proceeded similarly (see \cite[Theorem 1]{Huang09a}). 
\end{proof}

To see that the expression \eqref{CTM} is intrinsic, we prove that the integral surfaces can be replaced by more general surfaces, which are defined geometrically \cite{Huang09a}. 
\begin{prop} \label{GeneSur}
Suppose that $( M , g) $ is asymptotically flat with the RT condition. Let $\{ D_k \}_{k = 1}^{\infty} \subset M$ be closed sets such that the sets $ S_k = \partial D_k$ are connected $C^1$ surfaces without boundary which satisfy
\begin{align*}
&\qquad    (1) \; r_k = \inf \{ |x| : x\in S_k \} \rightarrow \infty \quad \textup{as } k \rightarrow \infty,&\\
&\qquad    (2) \; r_k^{-2} {\rm area}(S_k)  \textup{ is bounded as }  k \rightarrow \infty,&\\
&\qquad    (3) \;{\rm vol}\{ D_k \setminus D_k^-\} = O( r_k ^{3^-}), \notag & \\
&\qquad \quad   \textup{where } \,D_k^- = D_k \cap \{- D_k\} \textup{ and  $3^-$ is a number less than $3$}. &
\end{align*}
Then the center of mass defined by 
\begin{align*}
		C_I^l = \frac{1}{ 16\pi m} \lim_{ k \rightarrow \infty} \int_{ S_k } \sum_{i,j}\left( R^M_{ij} - \frac{1}{2} R_g g_{ij} \right) Y_{(l)}^i \nu_g^j \,d{\sigma}_g, \quad l = 1, 2, 3
\end{align*}
is independent of the sequence $\{ S_k \}$.
\end{prop}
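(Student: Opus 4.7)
The plan is to compare the integral over an arbitrary sequence $\{S_k\}$ with the integral over a large coordinate sphere using the divergence theorem, and to control the resulting volume integral using the Regge--Teitelboim parity structure together with the volume bound (3). Fix $\{S_k\}$ satisfying (1)--(3) and, for each $k$, choose $\rho_k$ with $B_{\rho_k}\supset D_k$ and $\rho_k\to\infty$. Applying the divergence theorem on $\Omega_k := B_{\rho_k}\setminus D_k$ gives
\[
\int_{|x|=\rho_k} E_{ij}\, Y_{(l)}^i\, \nu_g^j\, d\sigma_g \;-\; \int_{S_k} E_{ij}\, Y_{(l)}^i\, \nu_g^j\, d\sigma_g \;=\; \int_{\Omega_k} \nabla^j(E_{ij} Y_{(l)}^i)\, dV_g,
\]
where $E_{ij}=R_{ij}^M-\tfrac12 R_g g_{ij}$. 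Since coordinate spheres trivially satisfy (1)--(3), the left boundary term converges to $16\pi m C_I^l$ as $\rho_k\to\infty$ by \eqref{CTM}; the proposition thus reduces to showing that the volume integral vanishes as $k\to\infty$.

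By the contracted Bianchi identity $\nabla^j E_{ij}=0$, the integrand equals $\tfrac12 E^{ij}(\mathcal{L}_{Y_{(l)}} g)_{ij}$. Writing $g=\delta+h$ and using the Euclidean conformal Killing equation $(\mathcal{L}_{Y_{(l)}}\delta)_{ij}=-4x^l\delta_{ij}$, one obtains
\[
\nabla^j(E_{ij} Y_{(l)}^i) \;=\; x^l R + 2 x^l E^{ij}h_{ij} + \tfrac12 E^{ij}(\mathcal{L}_{Y_{(l)}} h)_{ij}.
\]
The constraint equation gives $R=2\mu+|\pi|^2-\tfrac12(\textup{tr}\pi)^2=O(|x|^{-2-2q})$, with the RT improvement $R^{\textup{odd}}=O(|x|^{-3-2q})$. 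A parity bookkeeping like that in the sketch of Lemma \ref{lemma:key} and in the proof of Proposition \ref{prop:cont} shows that each summand is pointwise $O(|x|^{-1-2q})$, and that the odd-in-$x$ part of each of the first two summands improves under the RT condition to the integrable rate $O(|x|^{-2-2q})$.

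To handle the volume integral, decompose $\Omega_k=\Omega_k^s\cup\Omega_k^a$ with $\Omega_k^s=\Omega_k\cap(-\Omega_k)$ symmetric and $\Omega_k^a=\Omega_k\setminus\Omega_k^s$ its asymmetric complement. On $\Omega_k^s$, the change of variables $x\mapsto -x$ kills the odd part of the integrand; for the Lie-derivative summand $E^{ij}(\mathcal{L}_{Y_{(l)}} h)_{ij}$ (whose even-in-$x$ part is still only $O(|x|^{-1-2q})$) one would further integrate by parts, using $\nabla^j E_{ij}=0$ to transfer the slowly-decaying contribution to a boundary term that is absorbed into the limit on $\partial B_{\rho_k}$. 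On $\Omega_k^a$, which is essentially the crescent $(-D_k)\setminus D_k$ concentrated in $\{|x|\ge c\, r_k\}$, the integrand is bounded by $C r_k^{-1-2q}$ while $\textup{vol}(\Omega_k^a)=O(r_k^{3^-})$ by condition (3), so the contribution is $O(r_k^{3^- - 1 - 2q})\to 0$ provided $3^-<1+2q$. To make the parity accounting fully rigorous for general $(g,\pi)$, I would reduce to approximating data with harmonic asymptotics via the density Theorem \ref{DenThm2}, where $g=u^4\delta$ outside a compact set and the Lie-derivative term becomes pure-trace so that every step is explicit in $u$; the conclusion then follows by the continuity argument in Proposition \ref{prop:cont}.

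The main obstacle is that the pointwise decay $O(|x|^{-1-2q})$ of $\nabla^j(E_{ij} Y_{(l)}^i)$ falls short of the $L^1$-at-infinity threshold in three dimensions when $q$ is close to $1/2$, so bounds on $\int_{\Omega_k}$ cannot close by naive estimates. The argument crucially uses the RT parity improvement (the non-integrable part is odd and cancels on symmetric regions) together with the volume condition (3) (so the asymmetric part contributes negligibly), and it is precisely the interplay between these two hypotheses that makes the proof go through.
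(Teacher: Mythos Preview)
The paper itself does not give a proof of this proposition; it is stated with a reference to \cite{Huang09a}.  Your overall strategy---apply the divergence theorem between $S_k$ and a large coordinate sphere, use the contracted Bianchi identity to rewrite the divergence as $\tfrac12 E^{ij}(\mathcal{L}_{Y_{(l)}} g)_{ij}$, then split the annulus into a symmetric and an asymmetric part and exploit the RT parity on the former and hypothesis (3) on the latter---is exactly the right mechanism, and it is the way the result is obtained in that reference.

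There are, however, two concrete errors in your parity bookkeeping that you should fix.  First, you state that the \emph{odd} parts of the summands $x^l R$ and $x^l E^{ij}h_{ij}$ improve to $O(|x|^{-2-2q})$, but it is the \emph{even} parts that must (and do) improve: since $x^l$ is odd, $(x^l R)^{\textup{even}}=x^l R^{\textup{odd}}=O(|x|^{-2-2q})$ under RT, and it is precisely the even part that survives on the symmetric region $\Omega_k^s$.  Second, and more substantively, your assertion that the even part of $E^{ij}(\mathcal{L}_{Y_{(l)}} h)_{ij}$ is ``still only $O(|x|^{-1-2q})$'' is wrong.  The vector field $Y_{(l)}$ is even in $x$, so $\mathcal{L}_{Y_{(l)}}$ exchanges parity; hence $(\mathcal{L}_{Y_{(l)}} h)^{\textup{even}}$ is governed by $h^{\textup{odd}}=O(|x|^{-1-q})$ and is $O(|x|^{-q})$, while $(\mathcal{L}_{Y_{(l)}} h)^{\textup{odd}}=O(|x|^{1-q})$.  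Pairing with $E^{\textup{even}}=O(|x|^{-2-q})$ and $E^{\textup{odd}}=O(|x|^{-3-q})$ gives
\[
\bigl(E^{ij}(\mathcal{L}_{Y_{(l)}} h)_{ij}\bigr)^{\textup{even}} = O(|x|^{-2-2q}),
\]
the same integrable rate as the other two summands.  Consequently your proposed extra integration by parts is unnecessary, and the appeal to the density Theorem~\ref{DenThm2} at the end is likewise not needed: the pointwise parity estimate closes directly for arbitrary RT data.  Finally, your observation that the asymmetric contribution is $O(r_k^{\,3^- -1-2q})$ is correct, but you should make explicit that the argument requires $3^- < 1+2q$; this is a genuine constraint linking hypothesis~(3) to the decay rate $q$, not something the density theorem removes.
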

The first two conditions (1) and (2) on $S_k$ are the conditions also considered by Bartnik \cite{Bartnik87} to ensure the mass is well-defined. The volume growth condition (3) allows us to consider a general class of surfaces which are roughly symmetric; namely,  the non-symmetric region $D_k \setminus D_k^-$ of $D_k$ has the volume growth slightly less than that of arbitrary regions in $M$. 

Last, we can prove that the center of mass is well-defined for a fixed time-slice. That is, if we compute center of mass with respect to two different asymptotically flat charts, say $\{x\}$ and $\{y\}$, the center of mass transforms properly. Assume that $F$ is the transition function between $\{x\}$ and $\{y\}$ and $y = F(x)$. Bartnik proved that the only possible coordinate changes for asymptotically flat manifolds at infinity are rotation and translation \cite[Corollary $3.2$]{Bartnik87}. More precisely, there is a rigid motion of $\mathbb{R}^3, (\mathcal{O}^i_j, a) \in O( 3, \mathbb{R} ) \times \mathbb{R} $ so that 
$$
		\left| F(x) - ( \mathcal{O} x + a ) \right| \in W^{2, \infty}_{0} (\mathbb{R}^3 \setminus B_{R_0}).
$$
\begin{theorem} \label{ChangeCoord}
Let $\{ x \}$ and $ \{  y \}$ be two asymptotically flat coordinate charts of $(M,g)$ with the RT condition as above. Assume that $ C_{I, x} $ and $ C_{I,y} $ are the centers of mass defined by \eqref{CTM} with respect to $\{x\}$ and $\{y\}$ respectively. Then 
$$ 
		C_{I, y} = \mathcal{O} C_{I, x} + a.
$$ 
\end{theorem}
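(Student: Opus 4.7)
The plan is to interpret the formula \eqref{CTM} as a coordinate-free pairing between the Einstein tensor $G := R^M - \tfrac{1}{2}R\,g$ and a vector field on $M$, and then analyze how the field $Y_{(l),y}$ (built from the $y$-chart) decomposes when expressed back in the $x$-chart. First I would invoke the cited Bartnik result to write $F(x) = \mathcal{O}x + a + R(x)$ with $R$ in the small class guaranteed by the RT condition. Using $\mathcal{O}^T\mathcal{O} = I$ together with $|y|^2 = |x|^2 + 2(\mathcal{O}^T a)\cdot x + |a|^2$, $y^l = \mathcal{O}^l_k x^k + a^l$, and $\partial/\partial y^i = \mathcal{O}^i_j \,\partial/\partial x^j$ (each up to an $R$-dependent error), a direct expansion yields the decomposition
\begin{align*}
	Y_{(l),y} = \mathcal{O}^l_k\, Y_{(k),x} + a^l X_x + \mathcal{T}_{(l)}(x) + \mathcal{E}_{(l)}(x),
\end{align*}
where $X_x = -2x^i\,\partial/\partial x^i$ is the dilation field appearing in \eqref{ADMmass}, $\mathcal{T}_{(l)}$ is an explicit Euclidean vector field of the form $c^j\,\partial/\partial x^j + c^{jk} x^k\,\partial/\partial x^j$ with constant $c^j$ and antisymmetric $c^{jk}$ (so that the linear part is an infinitesimal rotation and the constant part a pure translation), and $\mathcal{E}_{(l)}$ collects all contributions of the remainder $R$.

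Feeding this decomposition into $G(Y_{(l),y},\nu_g)\,d\sigma_g$ and using \eqref{CTM}, \eqref{ADMmass}, and the equality $m = m_I$ just established, the first two summands contribute exactly $16\pi m\bigl(\mathcal{O}^l_k C_{I,x}^k + a^l\bigr)$ in the limit $|x|\to\infty$. So the theorem reduces to showing that the contributions of $\mathcal{T}_{(l)}$ and $\mathcal{E}_{(l)}$ vanish. For $\mathcal{T}_{(l)}$, I would invoke the contracted second Bianchi identity $\nabla^j G_{ij} = 0$: integration by parts in an annulus $\{r \le |x| \le r'\}$ converts the boundary difference into the interior integral of $\tfrac{1}{2} G^{ij}(\mathcal{L}_{\mathcal{T}_{(l)}} g)_{ij}$. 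The translation piece produces $\mathcal{L}_{\partial/\partial x^j} g = \partial g/\partial x^j$, and the rotation piece produces $x^k\partial g/\partial x^j - x^j\partial g/\partial x^k$ (the symmetric shear vanishes because rotations are Killing for $\delta$). Under the RT condition, the parity pairings between the odd and even parts in $G$ and $\partial g$ produce the necessary cancellations, leaving a pointwise integrand of order $O(|x|^{-3-2q})$, which is integrable; the boundary integrals thus have a common limit, and a parity check analogous to the one used in the sketch of Lemma \ref{lemma:key} identifies that common limit as zero.

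The contribution of $\mathcal{E}_{(l)}$ is controlled by the $W^{2,\infty}_0$-bound on $R$ improved via the RT condition on the odd part of $R$, combined with $G = O(|x|^{-2-q})$ and surface area $O(r^2)$; alternatively, one can first approximate $g$ by a sequence with harmonic asymptotics using Theorem \ref{DenThm2} and then pass to the limit by the scheme of Proposition \ref{prop:cont}. Dividing the resulting identity by $16\pi m$ yields $C_{I,y}^l = \mathcal{O}^l_k C_{I,x}^k + a^l$, as desired. The hard part is the parity cancellation in the $\mathcal{T}_{(l)}$-step: constant translations and infinitesimal rotations are unphysical artifacts of the coordinate change, and naive decay counting is not enough to kill the corresponding boundary integrals; only the RT parity conditions force the balance between odd and even pieces of $g$ that makes their asymptotic contributions vanish. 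This is essentially the same mechanism that underlies the well-definedness of the Hamiltonian center of mass \eqref{def:ctm} under change of coordinates, so one expects substantial overlap with the change-of-coordinates argument for the classical formula in \cite{Huang09a}.
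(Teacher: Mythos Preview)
Your vector–field decomposition is correct and is a clean way to organize the computation: with $y=\mathcal{O}x+a$ (and ignoring $R$), one indeed finds $Y_{(l),y}=\mathcal{O}^l_k Y_{(k),x}+a^l X_x+\mathcal{T}_{(l)}$, where $\mathcal{T}_{(l)}$ consists of a constant field and an infinitesimal Euclidean rotation whose coefficients involve $a$. This matches the paper's remark that the rotation case $a=0$ is elementary (then $\mathcal{T}_{(l)}\equiv 0$, and Proposition~\ref{GeneSur} absorbs the change of integration surfaces), while the translation case is where the real work lies.

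The gap is in your treatment of the rotation piece of $\mathcal{T}_{(l)}$ when $a\neq 0$. For an infinitesimal rotation $c^i=c^{ik}x^k$ one has $(\mathcal{L}_c g)_{ij}=(\mathcal{L}_c h)_{ij}$ with $h=g-\delta$, and under the RT condition the leading part of $\mathcal{L}_c h$ is \emph{even} of order $O(|x|^{-q})$, while the leading part of $G$ is even of order $O(|x|^{-2-q})$. The even--even product is $O(|x|^{-2-2q})$, not $O(|x|^{-3-2q})$ as you claim; parity does not kill this term. This is still integrable for $q>1/2$, so the boundary limit $L_\infty=\lim_{r\to\infty}\int_{|x|=r}G_{ij}c^i\nu^j\,d\sigma_g$ exists, but your ``parity check'' does not identify $L_\infty$ as zero: on the sphere the surviving contribution is $\int G^{\textup{even}}_{ij}c^{ik}x^k x^j r^{-1}\,d\sigma_0$, which is even and of size $O(r^{1-q})$ a priori. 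For harmonic asymptotics $g=u^4\delta$ one checks directly that the radial structure forces $c^{ik}x^k(G x)_i=O(|x|^{-3})$ and hence $L_\infty=0$, but for general RT metrics this cancellation is not a parity statement.

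This is precisely why the paper singles out the translation case as the one requiring Theorem~\ref{DenThm2}: one reduces to harmonic asymptotics, where the $\mathcal{T}_{(l)}$ contribution can be computed explicitly and shown to vanish, and then passes to the limit via the scheme of Proposition~\ref{prop:cont}. Your outline already invokes the density theorem for $\mathcal{E}_{(l)}$; you should do the same for $\mathcal{T}_{(l)}$ rather than relying on a parity argument that does not close. A secondary issue is that Bartnik's $W^{2,\infty}_0$ control on the remainder $R$ gives $\mathcal{E}_{(l)}=O(|x|)$, so $G\cdot\mathcal{E}_{(l)}\cdot\nu=O(|x|^{-1-q})$ and the naive surface estimate is $O(r^{1-q})$, again not $o(1)$; here too the density theorem (or a finer parity statement for $R$, which you would have to prove) is what actually closes the argument.
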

An interesting phenomenon is that compared to rotation, translation is a more subtle rigid motion. If the translation vector $a$ is zero, the above formula follows directly from the definition of $C_I$ and Proposition \ref{GeneSur}. If $a$ is nonzero, Theorem \ref{DenThm2} is involved in the proof.

\end{document}